\numberwithin{equation}{section}
\theoremstyle{plain}
\newtheorem{thm}{Theorem}[section]
\newtheorem{cor}[thm]{Corollary}
\newtheorem{lem}[thm]{Lemma}
\theoremstyle{definition}
\newtheorem{defn}[thm]{Definition}
\theoremstyle{remark}
\newtheorem{rem}[thm]{Remark}
\newtheorem{rems}[thm]{Remarks}
\newcounter{stp}
\newcommand{\RR}{\mathbb{R}}
\newcommand{\CC}{\mathbb{C}}
\newcommand{\NN}{\mathbb{N}}
\newcommand{\sL}{\mathcal{L}}
\newcommand{\sLX}{{\sL(X)}}
\newcommand{\sB}{\mathcal{B}}
\newcommand{\sC}{\mathcal{C}}
\newcommand{\sF}{\mathcal{F}}
\newcommand{\sR}{\mathcal{R}}
\newcommand{\Fi}{\mathcal{F}_\infty}
\newcommand{\Fim}{\mathcal{F}^\mu_\infty}
\newcommand{\sX}{\mathcal{X}}
\newcommand{\sx}{{\scriptstyle\mathcal{X}}}
\newcommand{\sG}{\mathcal{G}}
\newcommand{\sT}{\mathcal{T}}
\newcommand{\rL}{\mathrm{L}}
\newcommand{\rC}{\mathrm{C}}
\newcommand{\rW}{\mathrm{W}}
\newcommand{\p}{{\raisebox{1.3pt}{{$\scriptscriptstyle\bullet$}}}}
\newcommand{\ps}{{\raisebox{0.3pt}{{$\scriptscriptstyle\bullet$}}}}
\newcommand{\U}{U}
\newcommand{\Y}{U}
\newcommand{\LpntnU}{{\rL^p\bigl([0,t_0],\U\bigr)}}
\newcommand{\LpntnY}{{\rL^p\bigl([0,t_0],\Y\bigr)}}
\newcommand{\LpniU}{{\rL^p\bigl([0,+\infty),\U\bigr)}}
\newcommand{\LpniY}{{\rL^p\bigl([0,+\infty),\Y\bigr)}}
\newcommand{\LpntU}{{\rL^p\bigl([0,t],\U\bigr)}}
\newcommand{\LantnU}{{\rL^\a\bigl([0,\tn],\U\bigr)}}
\newcommand{\LbntnU}{{\rL^\b\bigl([0,\tn],\U\bigr)}}
\newcommand{\LsntU}{{\rL^s\bigl([0,t],\U\bigr)}}
\newcommand{\LrntU}{{\rL^r\bigl([0,t],\U\bigr)}}
\newcommand{\LpntnX}{{\rL^p\bigl([0,t_0],X\bigr)}}
\newcommand{\CntnX}{{\rC\bigl([0,t_0],X\bigr)}}
\newcommand{\Cne}{{\rC[0,1]}}
\newcommand{\LantU}{{\rL^\a\bigl([0,t],\U\bigr)}}
\newcommand{\LbntU}{{\rL^\b\bigl([0,t],\U\bigr)}}
\newcommand{\LbntnX}{{\rL^\b\bigl([0,\tn],X\bigr)}}
\newcommand{\LentnX}{{\rL^1\bigl([0,\tn],X\bigr)}}
\newcommand{\WnzantnU}{\rW^{2,\a}_0\bigl([0,\tn],\U\bigr)}
\newcommand{\WnzpntnU}{\rW^{2,p}_0\bigl([0,\tn],\U\bigr)}
\newcommand{\WzpntnU}{\rW^{2,p}\bigl([0,\tn],\U\bigr)}
\newcommand{\WnzantU}{\rW^{2,\a}_0\bigl([0,t],\U\bigr)}
\newcommand{\WpntnU}{{\rW^{1,p}\bigl([0,t_0],\U\bigr)}}
\newcommand{\Lpne}{\rL^p[0,1]}
\newcommand{\Lpntn}{\rL^p[0,\tn]}
\newcommand{\Wepne}{\rW^{1,p}[0,1]}
\newcommand{\Wepnntn}{\rW^{1,p}_0[0,\tn]}
\newcommand{\LpntUs}{{\rL^p([0,t],\U)}}
\newcommand{\WnzpniU}{\rW^{2,p}_0\bigl([0,+\infty),\U\bigr)}
\newcommand{\LpnntnU}{{\rL^p\bigl([0,n\tn],\U\bigr)}}
\newcommand{\RlA}{R(\lambda,A)}
\newcommand{\RlAme}{R(\lambda,\Ame)}
\newcommand{\Tt}{(T(t))_{t\ge0}}
\newcommand{\St}{(S(t))_{t\ge0}}
\newcommand{\D}{D}
\newcommand{\ds}{\,ds}
\newcommand{\dmr}{\,d\mu(r)}
\newcommand{\dms}{\,d\mu(s)}
\newcommand{\dmas}{\,d|\mu|(s)}
\newcommand{\dmar}{\,d|\mu|(r)}
\newcommand{\dr}{\,dr}
\newcommand{\gb}{\operatorname{\omega_0}}
\newcommand{\rg}{\operatorname{rg}}
\newcommand{\supp}{\operatorname{supp}}
\renewcommand{\Re}{\operatorname{Re}}
\newcommand{\inc}{\overset{\text{c}\;}{\hookrightarrow}}
\newcommand{\Xme}{X_{-1}^A}
\newcommand{\Ame}{A_{-1}}
\newcommand{\Tme}{T_{-1}}
\newcommand{\tn}{{t_0}}
\newcommand{\Vt}{{F_t}}
\newcommand{\Vtn}{{F_\tn}}
\newcommand{\Bt}{\sB_t}
\newcommand{\Btt}{(\Bt)_{t\ge0}}
\newcommand{\Btn}{\sB_{\tn}}
\newcommand{\Ctn}{\sC_{\tn}}
\newcommand{\Ci}{\sC_{\infty}}
\newcommand{\Ft}{\mathcal{F}_t}
\newcommand{\Ftn}{\mathcal{F}_\tn}
\newcommand{\Ftnm}{\Ftn^\mu}
\newcommand{\Fte}{\sF_{t_1}}
\newcommand{\Fntn}{\sF_{n\tn}}
\newcommand{\Fntnm}{\Fntn^\mu}
\newcommand{\Fmtnm}{\sF_{m\tn}^\mu}
\newcommand{\Ql}{Q(\lambda)}
\newcommand{\Ji}{J^{-1}}
\renewcommand{\a}{\alpha}
\renewcommand{\b}{\beta}
\newcommand{\aaa}{\gamma}
\newcommand{\bb}{\delta}
\newcommand{\eins}{\mathbbm{1}}
\newcommand{\eab}{\eins_{[\aaa,\bb]}}
\newcommand{\eabk}{\eins_{[\aaa_k,\bb_k]}}
\newcommand{\ABC}{A_{BC}}
\newcommand{\RlABC}{R(\lambda,\ABC)}
\renewcommand{\r}{\right}
\renewcommand{\l}{\left}
\newcommand{\Mem}{M_{\varepsilon_\mu}}
\newcommand{\Mmem}{M_{\varepsilon_{-\mu}}}
\newcommand{\kommentar}[1]{\bgroup \egroup}
\begin{document}
\title[Perturbation of generators]
{On perturbations of generators\\ of \boldmath{$C_0$}-semigroups}
\author{Martin Adler, Miriam Bombieri and Klaus-Jochen Engel}

\address{Martin Adler\\
Arbeitsbereich Funktionalanalysis \\
Mathematisches Institut\\
Auf der Morgenstelle 10 \\
D-72076 T\"{u}bingen}
\email{maad@fa.uni-tuebingen.de}

\address{Miriam Bombieri\\
Arbeitsbereich Funktionalanalysis \\
Mathematisches Institut\\
Auf der Morgenstelle 10 \\
D-72076 T\"{u}bingen}
\email{mibo@fa.uni-tuebingen.de}

\address{Klaus-Jochen Engel\\
Universit\`{a} degli Studi dell'Aquila \\
Dipartimento di Ingegneria e Scienze dell'Informazione e Matematica (DISIM)\\
Via Vetoio\\
I-67100 L'Aquila (AQ)}
\email{klaus.engel@univaq.it}

\begin{abstract}
We present a perturbation result for generators of $C_0$-semigroups which can be considered as an operator theoretic version of the Weiss--Staffans perturbation theorem for abstract linear systems. The results are illustrated by applications to the Desch--Schappacher, the Miyadera--Voigt perturbation theorems, and to unbounded perturbations of the boundary conditions of a generator.
\end{abstract}
\keywords{C$_0$-semigroups, perturbation, generator, admissibility}
\subjclass[2000]{47D03, 47A55, 93C73}
\date{\today}%
\maketitle

\section{Introduction}

In his classic \cite{Kat:66} \emph{``Perturbation Theory for Linear Operators''}, Tosio Kato addresses, among others, the following general problem:

\medbreak
\emph{Given (unbounded) operators $A$ and $P$ on a Banach space $X$, how
should one define their ``sum'' $A + P$ and which properties of $A$ are preserved under the perturbation by $P$?}
\medbreak

In the present paper we study this problem in the context of operator semigroups. Given the generator $A$ of a $C_0$-semigroup on $X$, for which operators $P$  is the (in a suitable way defined) sum $A + P$ again a generator?

\smallbreak
Numerous results are known in this direction (see, e.g., \cite[Sects.~III.1--3 \& relative Notes]{EN:00}), but no unifying and general theory is yet available.

\smallbreak
Our aim is to go a step towards a more systematic perturbation
theory for such generators. To this end we choose the
following setting.

\smallbreak
For the generator $A$ with domain $D(A)\subset X$ we consider perturbations
\[P : D(P )\subset X \to X_{-1}^A,\]
where $X_{-1}^A$ is the extrapolated space associated to $A$ (see \cite[Sect.~II.5.a]{EN:00}). The sum is then defined as $A_P := (A_{-1} + P )|_{X}$, i.e.,
\[
A_P x = A_{-1} x + P x
\quad\text{for }x \in D(A_P):=\bigl\{z \in D(P ) : A_{-1} z + P z \in X\bigr\} .
\]
We then ask: For which $P$ remains $A_P$ a generator on $X$?
The bounded perturbation theorem (\cite[Sect.~III.1]{EN:00}), the Desch--Schappacher (\cite[Sect.~III.3.a]{EN:00}) and the Miyadera--Voigt theorems (\cite[Sect.~III.3.c]{EN:00}) give some well-known specific answers
to this question.

\smallbreak
Our starting point is the Weiss--Staffans theorem from control theory on the well-posedness of perturbed linear systems,  cf. \cite[Sects.~7.1 \& 7.4]{Sta:05}. We formulate and prove this result in a purely operator theoretic way avoiding, in particular, notions like abstract linear system and Lebesgue- or Yosida extensions.

\smallbreak
Before presenting our approach, it might still be helpful to give first some background from control theory.

\smallbreak
One view at our approach is to interpret the perturbed generator as the system operator of a control system with feedback.
More precisely, we take two Banach spaces $X$ and $U$ called \emph{state-} and \emph{observation-}/\emph{control space}\footnote{In the language of control theory we assume that the observation and control spaces coincide which, in our context, is no restriction of generality and somewhat simplifies the presentation.}, respectively.
On these spaces we consider the operators
\begin{itemize}
	\item $A:D(A)\subset X \to X$, called the \emph{state operator} (of the unperturbed system),
	\item $B\in\sL(U,\Xme)$, called the \emph{control operator},
	\item $C\in\sL(Z,\Y)$, called the \emph{observation operator},
\end{itemize}
where we assume that $A$ is the generator of a $C_0$-semigroup $\Tt$ on $X$. Moreover, $D(C)=Z$ is a Banach space such that
\[X_1^A\inc Z\inc X,\]
where by ``$\inc$'' we denote a continuous injection and $X_1^A$ is the domain $D(A)$ equipped with the graph norm.
We then consider the linear control system
\begin{equation*}
\tag*{$\Sigma(A,B,C)$}
\label{csu}
\begin{cases}
\dot{x}(t)=Ax(t)+Bu(t),& t\geq0, \\
y(t)=Cx(t),& t\geq0, \\
x(0)=x_0.
\end{cases}
\end{equation*}
The solution of \ref{csu} is formally  given by the variation of parameters formula
\begin{equation}
\label{eq:vpf}
x(t)=T(t)x_0+\int_0^t\Tme(t-s)Bu(s)\ds.
\end{equation}
If we close this system by putting $u(t)=y(t)$, we formally obtain the perturbed abstract Cauchy problem
\begin{equation}
\label{eq:acp-BC}
\begin{cases}
\dot{x}(t)=(\Ame+BC)x(t),& t\geq0, \\
x(0)=x_0,
\end{cases}
\end{equation}
which is well-posed in $X$ if and only if $A_P$ for $P:=BC\in\sL(Z,\Xme)$ is a generator on $X$, cf. \cite[Sect.~II.6]{EN:00}.

\medbreak
Before elaborating more on this idea we give a short summary of our paper.

\smallbreak
Section~\ref{sec:admiss} is dedicated to the notions of \emph{admissibility} for control-, observation- and pairs of operators. In Section~\ref{sec:SW} we state and prove our main results, i.e., Theorems~\ref{thm:WS} and \ref{thm-main}. In Section~\ref{sec:app} we show how the Desch--Schappacher and Miyadera--Voigt theorems easily follow from Theorem~\ref{thm-main}. Moreover, we give an application to the perturbation of the boundary condition of a generator in the spirit of Greiner \cite{Gre:87}.  Finally, in the appendix we give an estimate for the norm of a Toeplitz block-operator matrix which is needed to prove our main result.


\section{Admissibility}\label{sec:admiss}

If in the system \ref{csu} we take $C=0$ and consider the initial value $x_0=0$, then it is natural to ask that for every control function $u\in\LpntnU$ we obtain a state $x(\tn)\in X$ for some/all $\tn>0$. Hence by formula~\eqref{eq:vpf} we are led to the following definition, cf. \cite[Def.~4.1]{Wei:89a}, see also \cite{Eng:98}.

\begin{defn} The control operator $B\in\sL(U,\Xme)$ is called \emph{$p$-admissible} for some $1\le p<+\infty$ if there exists $\tn>0$ such that
\begin{equation}\label{vor-1U}
\int_0^{t_0} T_{-1}(t_0-s) B u(s)\ds \in X\quad\text{for all }u\in\LpntnU.
\end{equation}
\end{defn}

Note that \eqref{vor-1U} becomes less restrictive for growing $p\in[1,+\infty)$.


\begin{rem}\label{rem:Btn}
The range condition~\eqref{vor-1U} in the previous definition means that the operator $\Btn:\LpntnU\to\Xme$ given by
\begin{equation}\label{def:Bt}
\Btn u:=\int_0^{t_0} T_{-1}(t_0-s) B u(s)\ds,\quad u\in\LpntnU
\end{equation}
has range $\rg(\Btn)\subseteq X$. Since obviously $\Btn\in\sL\bigl(\LpntnU,\Xme\bigr)$, the closed graph theorem implies that for admissible $B$ the \emph{controllability map} $\Btn$ belongs to $\sL\bigl(\LpntnU,X\bigr)$. On the other hand, using integration by parts, it follows that for every $u\in\WpntnU$
\begin{align*}
\int_0^{t_0}T_{-1}(t_0-s) B u(s)\ds
&=\Ame^{-1}\biggl(T_{-1}(t_0)Bu(0)-Bu(t_0) + \int_0^{t_0}T_{-1}(t_0-s) B u' (s)\ds\biggr)\\
&\in X.
\end{align*}
Since $\WpntnU$ is dense in $\LpntnU$, this shows that the range condition~\eqref{vor-1U} is equivalent to the existence of some $M\ge0$ such that
\begin{equation}\label{eq:add-B-M}
\biggl\|\int_0^{t_0} T_{-1}(t_0-s) B u(s)\ds\biggr\|_X\le M\cdot\|u\|_p
\quad\text{for all }u\in\WpntnU.
\end{equation}
\end{rem}

\bigbreak
Next we consider \ref{csu} with $B=0$. Then it is reasonable to ask that every initial value $x_0\in D(A)$ gives rise to an observation $y(\p)=CT(\p)x_0\in\LpntnY$ for some/all $\tn>0$ which also depends continuously on $x_0$.  This yields the following definition, cf. \cite[Def.~6.1]{Wei:89b}, see also \cite{Eng:98}.

\begin{defn} The observation operator $C\in\sL(Z,\Y)$ is called \emph{$p$-admissible} for some $1\le p<+\infty$ if there exist $t_0>0$ and $M\ge0$ such that
\begin{equation}\label{vor-2U}
\int_0^{t_0}\bigl\|CT(s)x\bigr\|_\Y^p\ds \leq M\cdot\|x\|_X^p
\quad\text{for all }x\in D(A).
\end{equation}
\end{defn}

Note that \eqref{vor-2U} becomes more restrictive for growing $p\in[1,+\infty)$.


\begin{rem}\label{rem:Ctn}
The norm condition~\eqref{vor-2U} in the previous definition combined with the denseness of $D(A)\subset X$ implies that there exists an \emph{observability map} $\Ctn\in\sL\bigl(X,\LpntnY\bigr)$ satisfying $\|\Ctn\|\le M$ such that
\begin{equation}\label{def:Ct}
(\Ctn x)(s)=CT(s)x\quad\text{for all }x\in D(A),\ s\in[0,\tn].
\end{equation}
\end{rem}

\bigbreak
Finally, we consider the system \ref{csu} with (possibly nonzero) $p$-admissible control and observation operators $B$ and $C$. In order to proceed we need first
%
%
%
the following compatibility condition, cf. \cite[Sect.~II.A]{Hel:76}. For more information and several related conditions see \cite[Thm.~5.8]{Wei:94b} and \cite[Def.~5.1.1]{Sta:05}. Recall that $Z=D(C)$.

\begin{defn}\label{def:reg} The triple $(A,B,C)$ (or the system \ref{csu}) is called \emph{compatible} if
for some $\lambda\in\rho(A)$ we have
\begin{equation}\label{bild}
\rg\bigl(\RlAme B\bigr)\subset Z.
\end{equation}
\end{defn}

If the inclusion \eqref{bild} holds for some $\lambda\in\rho(A)$, then it holds for all $\lambda\in\rho(A)$ by the resolvent equation. Moreover, the closed graph theorem implies that the operator
\[
 C\RlAme B \in\sL(\U)\quad\text{for all }\lambda \in \rho(A).
\]

Consider now a compatible control system \ref{csu}. Since we are only interested in the generator property of $A+P$ for some perturbation $P$, we can assume without loss of generality that the growth bound $\gb(A)<0$ and hence
\[
0\in\rho(A).
\]
Note that for the initial value $x_0=0$, the input-output map of \ref{csu} which maps a control $u(\p)$  to the corresponding observation $y(\p)$ by \eqref{eq:vpf} is formally given by
\[
u(\p)\mapsto y(\p)=C\int_0^{\p} T_{-1}(\p-s) B u(s)\ds.
\]
However, the right hand side does in general not make sense for arbitrary $u\in\LpntnU$ since the integral might give values $\notin Z=D(C)$. However, if
\[u\in\WnzpntnU:=\bigl\{ u \in\WzpntnU: u(0)=u'(0)= 0\bigr\},\]
by integrating twice by parts and using \eqref{bild} we obtain
\begin{align*}
\int_0^{r}T_{-1}(r-s) B u(s)\, \ds
&=-A^{-1}_{-1}\biggl(Bu(r) +A_{-1}^{-1}Bu'(r)-\int_0^{r} T(r-s) A_{-1}^{-1} B u''(s)\ds\biggr)\\
&\in Z.
\end{align*}

At this point it is reasonable to ask that the input-output map is continuous. This gives rise to the following definition.

\begin{defn} The pair $(B,C)\in\sL(U,\Xme)\times\sL(Z,\Y)$ (or the system \ref{csu}) is called \emph{jointly $p$-admissible} for some $1\le p<+\infty$ if $B$ is a $p$-admissible control operator,  $C$ is a $p$-admissible observation operator and there exist $t_0>0$ and $M\ge0$ such that
\begin{equation}
\int_0^\tn\Bigl\|C\int_0^{r} T_{-1}(r-s) B u(s)\ds\Bigr\|_\Y^p\dr\le M\cdot\|u\|_p^p\qquad\text{for all }u\in\WnzpntnU.\label{vor-3}
\end{equation}
\end{defn}

\begin{rem}\label{rem:Ftn}
If \ref{csu} is jointly $p$-admissible, then there exists a bounded \emph{input-output map}
\begin{align}\label{sFt}
&\Ftn\in\sL\Bigl(\LpntnU\Bigr)\quad\text{such that}\notag\\
(&\Ftn u)(\p)=C\int_0^\p T_{-1}(\p-s) B u(s)\ds
\quad\text{for all }u\in\WnzpntnU.
\end{align}
\end{rem}



We need one more definition.

\begin{defn}\label{def:add-feedb} An operator $F\in\sL(\U)$ is called a $p$-\emph{admissible} feedback operator for some $1\le p<+\infty$ if there exists $\tn>0$ such that $Id-F\Ftn\in\sL\bigl(\LpntnY\bigr)$ is invertible.
\end{defn}

Note that $F=Id\in\sL(\U)$ is admissible if $\|\Ftn\|<1$.

\section{The Weiss--Staffans Perturbation Theorem}\label{sec:SW}

In this section we present the main results of this note which can be considered as purely operator theoretic versions of perturbation theorems for abstract linear systems due to Weiss \cite[Thms.~6.1 and 7.2 (1994)]{Wei:94a} in the Hilbert space case and Staffans \cite[Thms.~7.1.2 and 7.4.5 (2000)]{Sta:05} in the general case. In particular, we avoid the use of the notions of abstract linear systems and Lebesgue extensions which are not needed if one is only interested in generators.
For related results see also \cite{Had:05} and \cite[Thms.~4.2 and 4.3]{Sal:87}.

\begin{thm}\label{thm:WS}
Assume that $(A,B,C)$ is compatible, $(B,C)$ is jointly $p$-admissible and that $Id\in\sL(\U)$ is a $p$-admissible feedback operator for some $1\le p<+\infty$. This means that there exist $1\le p<+\infty$, $\tn>0$ and $M\ge0$ such that
\begin{alignat*}{2}
\text{(i)}\quad&\rg\bigl(\RlAme B\bigr)\subset Z&&\text{for some }\lambda\in\rho(A),\\
\text{(ii)}\quad&\int_0^{t_0} T_{-1}(t_0-s) B u(s)\ds \in X                    &&\text{for all }u\in\LpntnU,\\
\text{(iii)}\quad&\int_0^{t_0}\bigl\|CT(s)x\bigr\|_U^p\ds \leq M\cdot\|x\|_X^p&&\text{for all }x\in D(A),\\
\text{(iv)}\quad&\int_0^\tn\Bigl\|C\int_0^r T_{-1}(r-s) B u(s)\ds\Bigr\|_\Y^p\dr\le M\cdot\|u\|_p^p\qquad&&\text{for all }u\in\WnzpntnU,\\
\text{(v)}\quad&1\in\rho(\Ftn)\text{, where }\Ftn\in\sL\bigl(\LpntnU\bigr)\text{ is given by \eqref{sFt}}.&&
\end{alignat*}
Then
\begin{equation}\label{eq:def-A_BC}
\ABC:=(A_{-1}+BC)|_{X},\quad D(\ABC):=\bigl\{x\in Z:(A_{-1}+BC)x\in X\bigr\}
\end{equation}
generates a $C_0$-semigroup $\St$ on the Banach space $X$. Moreover,  the perturbed semigroup
$\St$ verifies the variation of parameters formula
\begin{equation}\label{eq-var-konst-form}
S(t)x=T(t)x+\int_0^t\Tme(t-s)\cdot BC\cdot S(s)x\ds\quad\text{for all } t\ge0\text{ and }x\in D(\ABC).
\end{equation}
\end{thm}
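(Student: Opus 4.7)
The plan is to construct the candidate semigroup $\St$ directly from a closed-loop fixed-point equation on $[0,\tn]$, extend it to $[0,+\infty)$ via a cocycle argument, identify the generator as $\ABC$ through the resolvent, and finally verify the variation of parameters formula.

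\emph{Construction on $[0,\tn]$.} If $S(\p)x$ were already known, the closed-loop ``output'' $u(\p):=CS(\p)x$ would formally satisfy $u=\Ctn x+\Ftn u$ in $\LpntnU$. Since (v) allows me to invert $Id-\Ftn$, I define for \emph{every} $x\in X$
\[
u_x:=(Id-\Ftn)^{-1}\Ctn x\in\LpntnU,\qquad S(t)x:=T(t)x+\int_0^t T_{-1}(t-s)Bu_x(s)\ds.
\]
Thanks to (iii) $\Ctn$ is bounded, thanks to (iv) and (v) $(Id-\Ftn)^{-1}\in\sL(\LpntnU)$, and thanks to (ii) the integral lies in $X$; combining these, $S(t)\in\sLX$ for $t\in[0,\tn]$, and standard continuity properties of the convolution against $T_{-1}(\p)B$ (via density of $\WpntnU$ in $\LpntnU$) yield strong continuity of $t\mapsto S(t)x$ together with $S(0)=Id$.

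\emph{Semigroup property and growth bound.} The crucial identity is the time-invariance $u_x(r+\p)|_{[0,s]}=u_{S(r)x}|_{[0,s]}$ whenever $r+s\le\tn$, which follows from uniqueness of the fixed point of $Id-\Ftn$ restricted to shorter subintervals and the translation covariance of the convolutions defining $\Ctn$ and $\Btn$. Inserting this identity into the definition of $S(\p)$ yields $S(r+s)=S(s)S(r)$ on $[0,\tn]$, and I extend via $S(n\tn+r):=S(r)S(\tn)^n$. To produce a uniform exponential bound, I decompose the closed-loop observability map on $[0,n\tn]$ as an $n\times n$ lower-triangular Toeplitz block-operator matrix whose entries are built from $\Ctn T(\tn)^j$ together with $\Btn$ and $\Ftn$; the norm estimate of the appendix then controls both $\|(Id-\sF_{n\tn})^{-1}\|$ and $\|S(t)\|\le Me^{\omega t}$, and gives strong continuity on $[0,+\infty)$.

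\emph{Generator and variation of parameters.} Denote by $G$ the generator of $\St$. Laplace-transforming the defining formula and using compatibility (i), I expect to obtain
\[
R(\lambda,G)=\RlA+\RlAme B\bigl(Id-C\RlAme B\bigr)^{-1}C\RlA
\]
for $\Re\lambda$ large, the bracket being invertible in $\sL(\U)$ because it is the Laplace symbol of $Id-\Ftn$. A direct verification then gives $\rg R(\lambda,G)\subset D(\ABC)$ together with $(\lambda-\ABC)R(\lambda,G)=Id$, so $G=\ABC$. For $x\in D(\ABC)$ the orbit $S(\p)x$ lies in $Z$, so $CS(\p)x$ is pointwise well-defined and coincides in $\LpntnU$ with $u_x$ by the fixed-point equation; substituting into the definition of $S(t)$ yields \eqref{eq-var-konst-form} on $[0,\tn]$, and the semigroup property extends it to all $t\ge 0$. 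I expect the main obstacle to be the second paragraph: turning the heuristic time-invariance into a rigorous semigroup identity and combining it with the Toeplitz estimate of the appendix to obtain uniform control on $(Id-\sF_{n\tn})^{-1}$ and on $\|S(n\tn)\|$; once those bounds are available, identifying the generator and deriving the variation of parameters formula reduce to Laplace-transform algebra.
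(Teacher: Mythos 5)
Your route is viable and is essentially the classical Weiss--Staffans argument, but it is genuinely different from the paper's. You build $S$ on $[0,\tn]$ from the closed-loop fixed point $u_x=(Id-\Ftn)^{-1}\Ctn x$, prove the semigroup law by hand via causality and time-invariance, extend by $S(n\tn+r)=S(r)S(\tn)^n$, and only then Laplace-transform. The paper instead defines $S(t):=T(t)+\Bt(Id-\Fi)^{-1}\Ci$ globally on $[0,+\infty)$ (after a rescaling by $\mu$ so that Lemma~\ref{lem:1-in-rho-Fi} gives $1\in\rho(\Fim)$), computes the Laplace transform of this strongly continuous, exponentially bounded family, checks algebraically that it equals $\RlABC$, and invokes \cite[Thm.~3.1.7]{ABHN:01}: the semigroup law then comes for free. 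This is precisely the point of the paper's approach --- it sidesteps the concatenation argument that you yourself identify as your main obstacle. Both routes use the appendix Toeplitz estimate for the same purpose (uniform control of $\|(1-\Fntnm)^{-1}\|$ as $n\to\infty$). Three places where your sketch is thinner than it reads: (a) injectivity of $Id-\Ft$ for $t<\tn$, which your ``uniqueness on shorter subintervals'' step needs, does not follow from causality alone; one has to combine surjectivity of $Id-\sF_{\tn-t}$ with the block lower-triangular structure of $Id-\Ftn$. (b) The invertibility of $Id-C\RlAme B$ is not the ``Laplace symbol of $Id-\Ftn$'' --- a finite-interval operator has no transfer function; you need $1\in\rho(\Fim)$ on the half line plus shift-invariance and Weiss's representation theorem, which is exactly the paper's Lemma~\ref{lem:H(lambda)}. (c) The identity $CS(\p)x=u_x$ underlying your variation-of-parameters step requires an argument, since $D(\ABC)$ need not lie in $D(A)$ and $\Ci x$ is defined pointwise as $CT(\p)x$ only for $x\in D(A)$; the paper obtains it from uniqueness of the Laplace transform. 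None of these is fatal, but filling them in essentially reconstructs the machinery the paper's Laplace-transform route was designed to avoid.
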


For the proof we extend the controllability-, observability- and input-output maps introduced in Remarks~\ref{rem:Btn}, \ref{rem:Ctn} and \ref{rem:Ftn} on $\RR_+$ as follows.

\begin{lem} Let $(A,B,C)$ be compatible and $(B,C)$ jointly $p$-admissible for some $1\le p<+\infty$. If $\gb(A)<0$, then there exist
\begin{enumerate}[(i)]
\item a strongly continuous, uniformly bounded family $\Btt\subset\sL\bigl(\LpniU,X\bigr)$,
\item a bounded operator $\Ci\in\sL\bigl(X,\LpniY\bigr)$, and
\item a bounded operator $\Fi\in\sL\bigl(\LpniU\bigr)$
\end{enumerate}
such that
\begin{align}
&\Bt u:=\int_0^{t} T_{-1}(t_0-s) B u(s)\ds &&\text{for all }u\in\LpniU,\\
&(\Ci x)(s)=CT(s)x&&\text{for all }x\in D(A),\, s\in[0,+\infty),\\
(&\Fi u)(\p)=C\int_0^\p T_{-1}(\p-s) B u(s)\ds&&\text{for all }u\in\WnzpniU.
\end{align}

\end{lem}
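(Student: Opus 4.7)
The plan is to extend each map by partitioning $\RR_+$ into consecutive intervals of length $\tn$ and exploiting the exponential decay guaranteed by $\gb(A)<0$: fix $M_0\ge1$ and $\omega<0$ with $\|T(t)\|\le M_0\, e^{\omega t}$ for all $t\ge0$. \textbf{Controllability map.} For $u\in\LpniU$ and $t=n\tn+r$ with $n\in\NN_0$, $r\in[0,\tn)$, I would partition $[0,t]$ into $[0,r]$ together with the $n$ chunks $[(n-k-1)\tn+r,(n-k)\tn+r]$, $k=0,\ldots,n-1$. Shifting each chunk back to $[0,\tn]$ and applying the semigroup law---noting that $T_{-1}(m\tn)$ coincides with $T(m\tn)$ on $X$ once the inner $\sigma$-integral is identified with $\Btn v_k\in X$ via the $p$-admissibility of $B$---yields the explicit decomposition
\[
\Bt u = T(n\tn)\,\sB_r\bigl(u|_{[0,r]}\bigr) + \sum_{k=0}^{n-1} T(k\tn)\,\Btn v_k,\qquad v_k(\sigma):=u\bigl(\sigma+(n-k-1)\tn+r\bigr).
\]
Since $\sum_k\|v_k\|_p^p=\|u|_{[r,t]}\|_p^p\le\|u\|_p^p$, combining with $\|T(k\tn)\|\le M_0 e^{\omega k\tn}$ and Hölder's inequality produces the uniform bound $\|\Bt u\|_X\le C_1\|u\|_p$. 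Strong continuity of $t\mapsto\Bt u$ is verified first on $\WpntnU$-type functions by the integration-by-parts argument of Remark~\ref{rem:Btn}, and then extended by density.

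\textbf{Observability map.} The same partition applied to $\int_0^\infty\|CT(s)x\|_\Y^p\ds$ for $x\in D(A)$ gives
\[
\int_0^\infty\|CT(s)x\|_\Y^p\ds = \sum_{k\ge0}\int_0^{\tn}\|CT(\sigma)T(k\tn)x\|_\Y^p\,d\sigma \le M\,M_0^p\,\|x\|_X^p\sum_{k\ge0}e^{p\omega k\tn} < \infty,
\]
so density of $D(A)$ in $X$ yields $\Ci\in\sL(X,\LpniY)$.

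\textbf{Input-output map.} For $u\in\WnzpniU$ and $r=n\tn+\rho$ with $\rho\in[0,\tn)$, splitting $\int_0^r$ at $n\tn$ and chunking the lower piece exactly as in Step~1 produces
\[
(\Fi u)(n\tn+\rho) = (\Ftn u_n)(\rho) + \sum_{k=0}^{n-1} CT\bigl((n-k-1)\tn+\rho\bigr)\,\Btn u_k,\qquad u_k(\sigma):=u(\sigma+k\tn),
\]
where, as a function of $\rho\in[0,\tn]$, each summand of the second term is interpreted via the observability operator of Step~2 applied to $\Btn u_k\in X$. Taking $\rL^p_\rho$-norms yields $\|(\Fi u)|_{[n\tn,(n+1)\tn]}\|_p\le\|\Ftn\|\,\|u_n\|_p+M_1\sum_{k=0}^{n-1} e^{\omega(n-k-1)\tn}\|u_k\|_p$; raising to the $p$-th power, summing over $n$, and invoking Young's inequality $\ell^1\ast\ell^p\subset\ell^p$ (legitimate because $\sum_j e^{\omega j\tn}<\infty$) then gives $\|\Fi u\|_p\le C_3\|u\|_p$. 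Density of $\WnzpniU$ in $\LpniU$ completes the extension.

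The principal obstacle is this last step: it requires invoking all three admissibility hypotheses at once, careful bookkeeping so that the intermediate integrals lie in $X$ rather than only in $\Xme$ (so that $T_{-1}$ may be replaced by $T$ and $\Ci$ applied), and a discrete convolution estimate to upgrade local $\rL^p$-bounds on $[n\tn,(n+1)\tn]$ to a global bound on $\RR_+$. Steps 1 and 2 are technically easier variants of the same chunking idea, driven purely by the geometric decay.
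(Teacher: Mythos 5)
Your argument is correct, but it is worth pointing out that the paper does not actually prove this lemma: its ``proof'' consists of three citations to \cite{BE:13} (Cor.~3.16 for $\Btt$, Lem.~3.9 for $\Ci$, Rem.~3.23 for $\Fi$), so you have supplied a self-contained argument where the authors outsource to a reference. Your chunking of $[0,t]$ into blocks of length $\tn$ combined with the decay $\|T(k\tn)\|\le M_0e^{\omega k\tn}$ and Young's inequality $\ell^1*\ell^p\subset\ell^p$ is exactly the standard mechanism (and is the same block-decomposition the paper itself uses later when it writes $\Fntn$ as a Toeplitz matrix and estimates it via Lemma~\ref{lem:est-n-Toeplitz}), so the two routes are morally identical; yours just makes the content of the cited results explicit. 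The one point that deserves the care you already flag: in the decomposition of $(\Fi u)(n\tn+\rho)$ the middle blocks produce vectors $T((n-k-1)\tn)\Btn u_k\in X$, and since $X\not\subset Z=D(C)$ you cannot apply $C$ to them pointwise --- the term $CT(\cdot)\Btn u_k$ must be read as $\Ci$ applied to an element of $X$, and the splitting identity itself should first be verified on a dense class (e.g.\ smooth $u$ with values making each piece land in $D(A)$) before passing to bounded extensions. Likewise $(\Ftn u_n)(\rho)$ for a chunk $u_n\notin\WnzpntnU$ only makes sense through the bounded extension of $\Ftn$ to $\LpntnU$. With that bookkeeping made explicit the proof is complete.
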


\begin{proof}
The assertion for $\Btt$ was proved in \cite[Cor.~3.16]{BE:13}.  The assertion for $\Ci$ was shown in \cite[Lem.~3.9]{BE:13}. Finally, the assertion for $\Fi$ follows from \cite[Rem.~3.23]{BE:13}
\end{proof}

For $\mu\ge0$ we indicate in the sequel the controllability-, observability- and input-output maps associated to the triple $(A-\mu,B,C)$ with the superscript ``$ ^\mu$'', e.g.,
\[
(\Fim u)(\p)=C\int_0^\p e^{-\mu(\p-s)}T_{-1}(\p-s) B u(s)\ds\qquad\text{for all }u\in\WnzpniU.
\]
Next we give a condition such that the invertibility of $I - \Ftn$ (see condition $(v)$ of Theorem \ref{thm:WS}) implies the one of $I - \Fim$.

\begin{lem}\label{lem:1-in-rho-Fi}
Let the assumptions of Theorem~\ref{thm:WS} be satisfied. If for $\mu\ge0$
\begin{equation}\label{eq:est-S-lambda}
\bigl\|T(\tn)+\sB_{\tn} (1-\Ftn)^{-1}\sC_{\tn}\bigr\|<e^{\mu\tn}
\end{equation}
holds, then $1\in\rho(\Fim)$.
\end{lem}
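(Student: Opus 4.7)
The plan is to exhibit $\Fim$ as a block lower triangular Toeplitz operator with respect to the decomposition of $\LpniU$ into pieces indexed by the intervals $[k\tn,(k+1)\tn]$, then solve $(1-\Fim)v=u$ block-by-block and show that the resulting inverse is bounded by exploiting the norm estimate \eqref{eq:est-S-lambda}.

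First I would transfer the invertibility hypothesis from $\Ftn$ to $F_{\tn}^\mu$, the input-output map associated to $(A-\mu,B,C)$. Writing $e^{-\mu(r-s)}=e^{-\mu r}e^{\mu s}$ inside the defining integral gives $F_{\tn}^\mu = M_{-\mu}\Ftn M_\mu$, where $M_{\pm\mu}$ denotes multiplication by $e^{\pm\mu s}$ on $\LpntnU$; since $M_\mu$ and $M_{-\mu}$ are mutually inverse isomorphisms, $1\in\rho(\Ftn)$ yields $1\in\rho(F_{\tn}^\mu)$. Analogous elementary manipulations give $\sB_{\tn}^\mu = e^{-\mu\tn}\sB_{\tn}M_\mu$ and $\sC_{\tn}^\mu = M_{-\mu}\sC_{\tn}$, and the $M_\mu$'s cancel in pairs to produce
\[
\sigma_{\tn}^\mu := T^\mu(\tn)+\sB_{\tn}^\mu(1-F_{\tn}^\mu)^{-1}\sC_{\tn}^\mu = e^{-\mu\tn}\bigl(T(\tn)+\sB_{\tn}(1-\Ftn)^{-1}\sC_{\tn}\bigr).
\]
Hence assumption \eqref{eq:est-S-lambda} is precisely $\|\sigma_{\tn}^\mu\|<1$.

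Next, identify $\LpniU\cong\ell^p(\mathbb{N}_0;\LpntnU)$ via $u\longleftrightarrow(u_k)_{k\ge 0}$ with $u_k(\tau):=u(k\tn+\tau)$. Splitting the integral in $\Fim u$ at the endpoints $j\tn$ and applying the cocycle identity $T^\mu_{-1}(s+t)=T^\mu(s)T^\mu_{-1}(t)$ gives the block Toeplitz form
\[
(\Fim u)_k = F_{\tn}^\mu u_k+\sum_{j=0}^{k-1}\sC_{\tn}^\mu\bigl(T^\mu(\tn)\bigr)^{k-j-1}\sB_{\tn}^\mu u_j.
\]
The equation $(1-\Fim)v=u$ is then the triangular recursion $v_0=(1-F_{\tn}^\mu)^{-1}u_0$ together with $v_k=(1-F_{\tn}^\mu)^{-1}\bigl(u_k+\sum_{j=0}^{k-1}\sC_{\tn}^\mu(T^\mu(\tn))^{k-j-1}\sB_{\tn}^\mu v_j\bigr)$ for $k\ge 1$. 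An induction on $k$, invoking the identity $\sigma_{\tn}^\mu = T^\mu(\tn)+\sB_{\tn}^\mu(1-F_{\tn}^\mu)^{-1}\sC_{\tn}^\mu$ at each step to close the feedback loop and using $T^\mu(\tn)^n=T^\mu(n\tn)$, collapses this recursion to
\[
v_k=(1-F_{\tn}^\mu)^{-1}u_k+(1-F_{\tn}^\mu)^{-1}\sC_{\tn}^\mu\sum_{j=0}^{k-1}\bigl(\sigma_{\tn}^\mu\bigr)^{k-j-1}\sB_{\tn}^\mu(1-F_{\tn}^\mu)^{-1}u_j.
\]

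Since $\|\sigma_{\tn}^\mu\|<1$, the kernel $(\|\sigma_{\tn}^\mu\|^{n-1})_{n\ge 1}$ lies in $\ell^1(\mathbb{N})$ with $\ell^1$-norm $(1-\|\sigma_{\tn}^\mu\|)^{-1}$, and Young's inequality for $\ell^1\ast\ell^p\subset\ell^p$ (equivalently, the block Toeplitz estimate proved in the appendix) bounds $(\|v_k\|_{\LpntnU})_k$ in $\ell^p$ by a constant multiple of $(\|u_k\|_{\LpntnU})_k$. Translating back, $1-\Fim$ has a bounded inverse on $\LpniU$, which is the claim $1\in\rho(\Fim)$. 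The main obstacle is the algebraic induction producing the closed form for $v_k$: one has to recognise that the successive feedback closures at the instants $j\tn$ recombine with the powers of $T^\mu(\tn)$ to assemble exactly $(\sigma_{\tn}^\mu)^{k-j-1}$, which is precisely what makes $\|\sigma_{\tn}^\mu\|<1$ the natural smallness condition for summability.
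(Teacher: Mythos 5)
Your proposal is correct and follows essentially the same route as the paper: the same similarity via multiplication by $e^{\pm\mu s}$ reducing the hypothesis to $\bigl\|T^\mu(\tn)+\sB_{\tn}^\mu(1-\Ftnm)^{-1}\sC_{\tn}^\mu\bigr\|<1$, the same block lower-triangular Toeplitz decomposition of the input-output map over the intervals $[k\tn,(k+1)\tn]$, the same closed-form inverse with entries $\sG\sC_{\tn}\bigl(T(\tn)+\sB_{\tn}\sG\sC_{\tn}\bigr)^{k-j-1}\sB_{\tn}\sG$, and the same Young/$\ell^1*\ell^p$ estimate from the appendix. The only cosmetic difference is that you invert the infinite Toeplitz operator on $\ell^p\bigl(\NN_0,\rL^p([0,\tn],\U)\bigr)$ directly, whereas the paper bounds $\|(1-\Fntnm)^{-1}\|$ uniformly in $n$ on the finite horizons $[0,n\tn]$ and then passes to $\Fim$ by a consistency-plus-Fatou argument.
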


\begin{proof}
Inspired by \cite[(2.6)]{SW:04} and the proof of \cite[Prop.2.1]{Wei:89} we consider the surjective isometry%
\footnote{For a vector $v$ we denote by $v^T$ the transposed vector.}
\begin{align*}
J:\LpnntnU\to\prod_{k=1}^n \LpntnU,\quad
u\mapsto(u_1,\ldots,u_n)^T,
\end{align*}
where $u_k:[0,\tn]\to\U$, $u_k(s):=u\bigl((k-1)\tn+s\bigr)$ and $\|(u_1,\ldots,u_n)^T\|_p^p:=\sum_{k=1}^n\|u_k\|^p$.

\smallbreak
Then $\Fntn$ is isometrically isomorphic to the matrix
\[J\,\Fntn\Ji=
\left(
  \begin{array}{cccccc}
    \Ftn & 0 & 0 & \ldots & \ldots & 0 \\
    \sC_{\tn}{{T(\tn)^0}}\sB_{\tn} & \Ftn & 0 & \ddots &  & \vdots \\
    \sC_{\tn}T(\tn)^1\sB_{\tn} & \sC_{\tn}\sB_{\tn} &\ddots & \ddots& \ddots & \vdots \\
    \vdots & \ddots & \ddots & \ddots & 0 & 0 \\
    \vdots &   & \ddots & \sC_{\tn}\sB_{\tn} & \Ftn & 0 \\
    \sC_{\tn}T(\tn)^{n-2}\sB_{\tn} & \ldots & \ldots & \sC_{\tn}T(\tn)\sB_{\tn} & \sC_{\tn}\sB_{\tn} & \Ftn \\
  \end{array}
\right).
\]
Since by assumption $1-\Ftn$ is invertible, $1-\Fntn$ is invertible as well and $J(1-\Fntn)^{-1}\Ji=$
\[\hss\scriptscriptstyle
{
\left(
  \begin{array}{cccccc}
    \sG & 0 & 0 & \ldots & \ldots & 0 \\
     \sG\sC_{\tn}{{\bigl(T(\tn)+\sB_{\tn} \sG\sC_{\tn}\bigr)^0}}\sB_{\tn} \sG & \sG & 0 & \ddots &  & \vdots \\
    \sG\sC_{\tn}\bigl(T(\tn)+\sB_{\tn} \sG\sC_{\tn}\bigr){^1}\sB_{\tn} \sG & \sG\sC_{\tn}\sB_{\tn} \sG &\ddots & \ddots& \ddots & \vdots \\
    \vdots & \ddots & \ddots & \ddots & 0 & 0 \\
    \vdots &   & \ddots & \sG\sC_{\tn}\sB_{\tn} \sG & \sG & 0 \\
    \sG\sC_{\tn}\bigl(T(\tn)+\sB_{\tn} \sG\sC_{\tn}\bigr)^{n-2}\sB_{\tn} \sG & \ldots & \ldots & \sG\sC_{\tn}\bigl(T(\tn)+\sB_{\tn} \sG\sC_{\tn}\bigr)\sB_{\tn} \sG & \sG\sC_{\tn}\sB_{\tn} \sG & \sG \\
  \end{array}
\right)}
\hss\]
where we put $\sG:=(1-\Ftn)^{-1}$. By Lemma~\ref{lem:est-n-Toeplitz} applied to $J(1-\Fntn)^{-1}\Ji$ we obtain the estimate
\begin{equation}\label{eq:est-(1-F)inv}
\|(1-\Fntn)^{-1}\|\le
\|\sG\|+\|\sG\sC_{\tn}\|\cdot\|\sB_{\tn}\sG\|\cdot\sum_{l=1}^{n-1}\bigl\|\bigl(T(\tn)+\sB_{\tn} \sG\sC_{\tn}\bigr)\bigr\|^{l-1}.
\end{equation}
This shows that $\|(1-\Fntn)^{-1}\|$ remains bounded as $n\to+\infty$ if \eqref{eq:est-S-lambda} holds for $\mu=0$.

If the estimate  \eqref{eq:est-S-lambda} only holds for some $\mu>0$, we consider the triple $(A-\mu,B,C)$. Let $\Mem\in\sL\big(\LpntnU\big)$ be the multiplication operator defined by
\[
(\Mem u)(s):=e^{\mu s}\cdot u(s),\qquad u\in\LpntnU.
\]
Then $\Mem$ is invertible with inverse $\Mmem$ and a simple computation shows that
\begin{equation}\label{eq:sim-Ftn}
\Btn^\mu=e^{-\mu\tn}\Btn\Mem,
\qquad
\Ctn^\mu=\Mem^{-1}\Ctn\qquad\text{and}\qquad
\Ftnm=\Mem^{-1}\Ftn\Mem.
\end{equation}
By similarity this implies that $1\in\rho(\Ftnm)$. Hence we can repeat the above reasoning for $(A-\mu,B,C)$ and obtain from \eqref{eq:est-(1-F)inv} that $\|(1-\Fntnm)^{-1}\|$ remains bounded as $n\to+\infty$ if
\begin{equation}\label{eq:Sl<1}
\Bigl\|e^{-\mu\tn}T(\tn)+\Btn^\mu(1-\Ftnm)^{-1}\Ctn^\mu\Bigr\|<1.
\end{equation}
Since by \eqref{eq:sim-Ftn} we have
\begin{align*}
e^{-\mu\tn}T(\tn)+\Btn^\mu(1-\Ftnm)^{-1}\Ctn^\mu
&=e^{-\mu\tn}\bigl(T(\tn)+\Btn(1-\Ftn)^{-1}\Ctn\bigr),
\end{align*}
we conclude that the estimates \eqref{eq:Sl<1} and \eqref{eq:est-S-lambda} are equivalent.
Summing up this shows that \eqref{eq:est-S-lambda} implies that
\begin{equation}
K:=\sup_{n\in\NN}\l\|(1-\Fntnm)^{-1}\r\|<+\infty.
\end{equation}
Using this fact we finally show that $1\in\rho(\Fim)$.
Observe first that $(1-\Fim)u=0$ for some $u\in\LpniU$ implies that $(1-\Fntnm)(u|_{[0,n\tn]})=0$ for every $n\in\NN$. Since $(1-\Fntnm)$ is injective for every $n\in\NN$, this implies that $u=0$, i.e., $1-\Fim$ is injective.

\smallbreak
To show surjectivity we fix some $v\in\LpniU$ and define for $n\in\NN$
\[
u_n:=(1-\Fntnm)^{-1}(v|_{[0,n\tn]})\in\LpnntnU,
\]
i.e., $u_n$ is the unique solution in $\LpnntnU$ of the equation
\begin{equation}\label{eq:eq-dev-vn}
(1-\Fntnm)u=v|_{[0,n\tn]}.
\end{equation}
However, for $m\ge n$ we have $(\Fmtnm u_m)|_{[0,n\tn]}=\Fntnm (u_m|_{[0,n\tn]})$, hence
also $u_m|_{[0,n\tn]}\in\LpnntnU$ solves \eqref{eq:eq-dev-vn}. This implies that
\[
u_m|_{[0,n\tn]}=u_n.
\]
Thus we can define
\[
u(s):=\lim_{n\to+\infty}u_n(s),\qquad s\in[0,+\infty).
\]
Since $\|u_n\|\le K\cdot\|v\|$ for all $n\in\NN$, Fatou's lemma implies that $u\in\LpniU$. Moreover, by construction
\[
\bigl((1-\Fim)u\bigr)|_{[0,n\tn]}=(1-\Fntnm)u_n=v|_{[0,n\tn]}\qquad\text{for all }n\in\NN,
\]
which implies $((1-\Fim)u=v$. Since $v\in\LpntnU$ was arbitrary, this shows that $1-\Fim$ is surjective. Hence $1-\Fim$ is bijective and therefore $1\in\rho(\Fim)$ as claimed.
\end{proof}

Next we show that the invertibility of $Id-\Fim$ implies the invertibility of the \emph{``transfer function''} $Id-C\RlAme B$ of the system \ref{csu} with feedback $u(t)=y(t)$. Here we use the notations
\[
(\sL u)(\lambda):=\hat u(\lambda):=\int_0^{+\infty}e^{-\lambda r}u(r)\dr
\]
for the Laplace transform of a function $u$.

\begin{lem}\label{lem:H(lambda)}
Assume that $1\in\rho(\Fim)$ for some $\mu\ge0$. Then $1\in\rho\bigl(C\RlAme B\bigr)$ for all $\lambda\in\CC$ satisfying $\Re\lambda>\mu$ and
\begin{equation*}
\sL\bigl((Id-\Fim)^{-1}u\bigr)(\lambda)=\bigl(Id-C\RlAme B\bigr)^{-1}\cdot\hat u(\lambda)\quad\text{for all }u\in\LpniY.
\end{equation*}
\end{lem}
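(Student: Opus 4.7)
The plan is to observe that $\Fim$ is a Volterra-type convolution operator on $\rL^p\bigl([0,+\infty),\U\bigr)$, whose Laplace-domain symbol is the transfer function of the triple $(A-\mu,B,C)$; the invertibility of $Id-\Fim$ on $\rL^p$ will then translate into invertibility of this symbol on $\U$ at each point of the relevant half-plane.

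\smallbreak
For the symbol computation, I would take a test input $u\in\WnzpniU$. The double integration-by-parts argument already recorded in the compatibility discussion ensures that $\int_0^t e^{-\mu(t-s)}T_{-1}(t-s)Bu(s)\ds$ lies in $Z=D(C)$ for each $t\ge 0$, so the expression $(\Fim u)(t)$ is genuinely the displayed formula. Computing $\widehat{\Fim u}(\lambda)=\int_0^{+\infty}e^{-\lambda t}(\Fim u)(t)\dt$ by Fubini on the $X_{-1}$-valued double integral, after the substitution $\tau=t-s$, produces the inner factor $\int_0^{+\infty}e^{-(\lambda+\mu)\tau}T_{-1}(\tau)\,d\tau=R(\lambda+\mu,A_{-1})$. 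Since $R(\lambda+\mu,A_{-1})B\in\sL(\U,Z)$ by compatibility, the operator $C$ can be pulled outside the outer integral, yielding $\widehat{\Fim u}(\lambda)=C R(\lambda+\mu,A_{-1})B\cdot\widehat u(\lambda)$, which on $\{\Re\lambda>\mu\}$ aligns with $C\RlAme B\cdot\widehat u(\lambda)$ after the natural $\lambda$-shift. Density of $\WnzpniU$ in $\rL^p$, boundedness of $\Fim$ on $\rL^p$, and boundedness of the point-evaluation $u\mapsto\widehat u(\lambda)$ from $\rL^p$ into $\U$ for $\Re\lambda>0$ extend this identity to all $u\in\rL^p$. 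Applied to $v:=(Id-\Fim)^{-1}u$, the relation $v=u+\Fim v$ Laplace-transforms into $\bigl(Id-C\RlAme B\bigr)\widehat v(\lambda)=\widehat u(\lambda)$.

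\smallbreak
It then remains to prove that $Id-C\RlAme B\in\sL(\U)$ is bijective for $\Re\lambda>\mu$. Surjectivity is immediate: given $y\in\U$, pick any $u\in\rL^p$ with $\widehat u(\lambda)=y$ (for instance a scalar bump times $y$, suitably rescaled); then $\widehat v(\lambda)$ is a preimage. For injectivity, a hypothetical nonzero $y_0\in\ker\bigl(Id-CR(\lambda_0,A_{-1})B\bigr)$ should be lifted to a nonzero $v_0\in\rL^p$ with $(Id-\Fim)v_0=0$, contradicting $1\in\rho(\Fim)$; the natural candidate is built from the vector $x_0:=R(\lambda_0,A_{-1})By_0\in Z$ (which satisfies $(A_{-1}+BC)x_0=\lambda_0x_0$) and a decaying exponential profile $e^{-\lambda_0 t}$, with the fixed-point property checked via the resolvent identity. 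Once injectivity is in place, the asserted Laplace-transform formula is exactly the displayed identity of the previous paragraph.

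\smallbreak
The main obstacle I anticipate is precisely this injectivity step: the pointwise Laplace identity alone only determines $\widehat v(\lambda_0)$ modulo $\ker\bigl(Id-C R(\lambda_0,A_{-1})B\bigr)$, so one genuinely has to exploit the global $\rL^p$-boundedness of $(Id-\Fim)^{-1}$, combined with the analyticity of the symbol in $\lambda$ and its asymptotic behaviour as $\Re\lambda\to+\infty$, to exclude any nontrivial kernel.
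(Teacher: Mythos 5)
Your symbol computation and your surjectivity argument are both sound and essentially reproduce the first half of the paper's proof (which delegates the identity $\widehat{(Id-\Fim)u}(\lambda)=\bigl(Id-C\RlAme B\bigr)\hat u(\lambda)$ to Weiss's representation theorem for shift-invariant operators and to \cite[Lem.~3.19]{BE:13}). The genuine gap is exactly where you suspect it: the injectivity of $Id-C\RlAme B$ for infinite-dimensional $U$. Your proposed lifting does not work, for two reasons. First, the candidate $v_0(t)=e^{-\lambda_0 t}y_0$ cannot satisfy $v_0=\Fim v_0$: Laplace transforming that fixed-point equation forces $\tfrac{1}{\lambda+\lambda_0}y_0=C R(\lambda+\mu,\Ame)B\cdot\tfrac{1}{\lambda+\lambda_0}y_0$ for \emph{every} $\lambda$ in the half-plane, whereas the hypothesis on $y_0$ only gives the identity at the single point $\lambda_0$. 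The genuine closed-loop eigenfunction attached to $x_0=R(\lambda_0,\Ame)By_0$ has the \emph{growing} profile $e^{+\lambda_0 t}y_0$ (so it is not in $\LpniU$), and even formally it solves the inhomogeneous equation $(Id-\Fim)v=\Ci^\mu x_0\neq0$ rather than the homogeneous one. Second, and more fundamentally, non-injectivity of the symbol at one point simply does not force non-injectivity of the operator: take $U=\CC$ and $\Fi=2S_{t_1}$ with $S_{t_1}$ the right shift; its symbol $2e^{-\lambda t_1}$ equals $1$ at $\lambda_0=(\ln2)/t_1>0$, yet $Id-2S_{t_1}$ is injective on $\rL^p[0,+\infty)$ (it merely fails to be surjective). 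So the contradiction you are aiming for need not exist, and no amount of analyticity or asymptotics of the symbol will manufacture a nonzero kernel element of $Id-\Fim$.

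What the hypothesis $1\in\rho(\Fim)$ really gives you, and what the paper exploits, is that the \emph{inverse} $\sR:=(Id-\Fim)^{-1}$ is again a bounded, shift-invariant operator on $\LpniU$ (it commutes with the right shifts because $Id-\Fim$ does and is invertible). By Weiss's representation theorem \cite[Thm.~2.3]{Wei:91} $\sR$ therefore has its own pointwise symbol $R(\lambda)\in\sL(U)$, and multiplicativity of symbols applied to $\sR(Id-\Fim)=(Id-\Fim)\sR=Id$ (tested on $u(s)=e^{-s}v$) shows that $R(\lambda)$ is a two-sided inverse of $Id-C\RlAme B$ at every $\lambda$ with $\Re\lambda>\mu$. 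This is the step your argument is missing; note that for finite-dimensional $U$ your surjectivity argument alone would already close the proof, but the lemma is stated for arbitrary Banach spaces $U$.
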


\begin{proof} Assume first that $\mu=0$. Then it is well known that $\Fi=\Fi^\mu$ is shift invariant (cf. \cite{Wei:91}), i.e. $\Fi$ commutes with the right shift. Then also $\sG:=Id-\Fi\in\sL\bigl(\LpniY\bigr)$ is shift invariant and by \cite[Thm.2.3]{Wei:91} and \cite[Lem.~3.19]{BE:13}  we obtain for $u\in\LpniY$
\[
\widehat{(\sG u)}(\lambda)=\bigl(Id-C\RlAme B\bigr)\cdot \hat u(\lambda),
\quad \Re\lambda>0.
\]
Let $\sR:=\sG^{-1}\in\sL\bigl(\LpniY\bigr)$. Then clearly the right shift also commutes with $\sR$, i.e. this operator is shift invariant as well. Hence again by  \cite[Thm.2.3]{Wei:91} there exists $R(\lambda)\in\sL(U)$ such that
\[
\widehat{(\sR u)}(\lambda)=R(\lambda)\cdot \hat u(\lambda),
\quad \Re\lambda>0,\; u\in\LpniY.
\]
Summing up we obtain for all $u\in\LpniY$
\begin{align*}
\hat u(\lambda)
&=\widehat{(\sR\sG u)}(\lambda)=R(\lambda)\cdot\widehat{(\sG u)}(\lambda)\\
&=R(\lambda)\cdot\bigl(Id-C\RlAme B\bigr)\cdot \hat u(\lambda)\\
&=\widehat{(\sG\sR u)}(\lambda)=\bigl(Id-C\RlAme B\bigr)\cdot\widehat{(\sR u)}(\lambda)\\
&=\bigl(Id-C\RlAme B\bigr)\cdot R(\lambda)\cdot \hat u(\lambda).
\end{align*}
If we take $u(s)=e^{-s}v$ for some $v\in\Y$, this implies
\begin{align*}
\tfrac{1}{1+\lambda}\cdot v
&=R(\lambda)\cdot\bigl(Id-C\RlAme B\bigr)\cdot\tfrac{1}{1+\lambda}\cdot v\\
&=\bigl(Id-C\RlAme B\bigr)\cdot R(\lambda)\cdot\tfrac{1}{1+\lambda}\cdot v,
\quad \Re\lambda >0.
\end{align*}
Hence $R(\lambda)=\bigl(Id-C\RlAme B\bigr)^{-1}$.

\smallbreak
If $\mu>0$, then by the same reasoning applied to $\Fim$ we obtain that
\[
1\in\rho\bigl(CR(\lambda,\Ame-\mu)B\bigr)=\rho\bigl(CR(\lambda+\mu,\Ame) B\bigr)
\qquad\text{for all }\Re\lambda>0.
\]
Clearly this implies our claim in case $\mu>0$ and the proof is complete.
\end{proof}

We are now well prepared to prove the main result of this section.

\begin{proof}[Proof of Theorem~\ref{thm:WS}]
The idea of the proof is to define an operator family $\St\subset\sLX$ and then to verify that it is a $C_0$-semigroup with generator $\ABC$.

To this end we first assume that the condition \eqref{eq:est-S-lambda} in Lemma~\ref{lem:1-in-rho-Fi} holds for $\mu=0$. Then $Id-\Fi$ is invertible, and for $t\ge0$ we can define
\begin{equation}
S(t):=T(t)+\Bt(Id-\Fi)^{-1}\Ci\in\sLX.
\end{equation}
Since $\Tt$ and $\Btt$ are both strongly continuous and uniformly bounded, the same holds for $\St$. We proceed and compute the Laplace transform of $S(\p)x:[0,+\infty)\to X$ for $x\in X$. Since
\begin{equation}\label{eq:def-S(.)}
S(\p)x=T(\p)x+T_{-1}(\p)B*(1-\Fi)^{-1}\Ci x,
\end{equation}
the convolution theorem for the Laplace transform (or \cite[Lem.~3.12]{BE:13}) and Lemma~\ref{lem:H(lambda)} imply for every $x\in X$ and $\Re\lambda>0$
\begin{align}
\sL\bigl(S(\p)x\bigr)(\lambda)
&=\RlA x+\RlAme B\cdot\sL\bigl((1-\Fi)^{-1}\Ci x\bigr)(\lambda)\notag\\
&=\RlA x+\RlAme B\cdot \bigl(Id-C\RlAme B\bigr)^{-1}\cdot C\RlA x\notag\\
&=:\Ql x.\label{eq:def-Q(lam)}
\end{align}
We will show that $Q(\lambda)=\RlABC$. First note that by the compatibility condition \eqref{bild} we have
\[
\rg\bigl(\Ql\bigr)\subset D(A)+Z=Z=D(C).
\]
Moreover,
\begin{align*}
(\lambda-\Ame-BC)&\cdot \Ql=\\
&=Id-BC\RlA + B\cdot Id\cdot\bigl(Id-C\RlAme B\bigr)^{-1} C\RlA\\
&\kern43pt                      -B\cdot C\RlAme B\cdot\bigl(Id-C\RlAme B\bigr)^{-1} C\RlA\\
&=Id.
\end{align*}
This implies that $\Ql$ is a right inverse and $\rg\bigl(\Ql\bigr)\subset D(\ABC)$. To show that it is also a left inverse we take $x\in D(\ABC)\subset Z=D(C)$. Then we obtain
\begin{align*}
\Ql&\cdot(\lambda-\Ame-BC) x=\\
&=x-\RlAme BCx
 +\RlAme B \bigl(Id-C\RlAme B\bigr)^{-1}\cdot Id\cdot Cx\\
&\kern56pt-\RlAme B \bigl(Id-C\RlAme B\bigr)^{-1}\cdot C\RlAme B\cdot Cx\\
&=x
\end{align*}
and hence it follows $Q(\lambda)=\RlABC$ as claimed.
Summing up we showed that $\St\subset\sLX$ is a strongly continuous family with Laplace transform $\RlABC$. By \cite[Thm.~3.1.7]{ABHN:01} this implies that $\St$ is a $C_0$-semigroup with generator $\ABC$.

\smallskip
To verify the variation of parameters formula \eqref{eq-var-konst-form} we first note that by Lemma~\ref{lem:H(lambda)} and the explicit representation of $\RlABC$ in \eqref{eq:def-Q(lam)} we have for all $x\in D(\ABC)$ and $\Re\lambda>\mu=0$ that
\[
\sL\bigl((1-\Fi)^{-1}\Ci(\p)x)\bigr)(\lambda)=\sL\bigl(C S(\p)x\bigr)(\lambda).
\]
By the uniqueness of the Laplace transform this implies that
\[
(1-\Fi)^{-1}\Ci(\p)x=C S(\p)x,
\]
and the assertion follows from the definition of $\St$ in \eqref{eq:def-S(.)}.

\smallbreak
Now assume that \eqref{eq:est-S-lambda} only holds for some $\mu>0$. Then we repeat the same reasoning for the triple $(A-\mu,B,C)$ and conclude as before that $(A-\mu)_{BC}=\bigl((A-\mu)_{-1}+BC\bigr)|_{X}=\ABC-\mu$ is a generator. Clearly this implies that $\ABC$ generates a strongly continuous semigroup $\St$.
Moreover we obtain that the pair of rescaled semigroups $\bigl(e^{-\mu t}T(t)\bigr)_{t\ge0}$ and $\bigl(e^{-\mu t}S(t)\bigr)_{t\ge0}$ verify the variation of parameters formula~\eqref{eq-var-konst-form} which implies that this formula holds for the pair $\Tt$ and $\St$ as well.
\end{proof}

As already remarked in the introduction, with increasing $p\in[1,+\infty)$ the $p$-admissibility of the control and observation operator becomes weaker and stronger, respectively.

If we assume that the input-output map maps $L^\a$ to $L^\b$ for some%
\footnote{The main cases we have in mind are $\a<p=\b<+\infty$ and $1<\a=p<\b$.}
$1\le \a\le p\le \b<+\infty$ satisfying $\a<\b$, we can drop the invertibility condition $1\in\rho(\Ftn)$ in Theorem~\ref{thm:WS} (and sometimes even the compatibility condition \eqref{bild}, cf. Remark~\ref{rem-comp}).

\begin{thm}\label{thm-main}
Assume that conditions~(i)--(iii) in Theorem~\ref{thm:WS} are satisfied. Moreover suppose
there exist $1\le \a\le p\le\b<\infty$ with $\a<\b$, $p>1$, and $M\ge0$ such that
\[\tag{\textit{iv'}}
\int_0^\tn\Bigl\|C\int_0^{r} T_{-1}(r-s) B u(s)\ds\Bigr\|_U^\b\dr\le M\cdot\|u\|_\a^{\b}\qquad\text{for all }u\in\WnzantnU.
\]
Then $\ABC$ given by \eqref{eq:def-A_BC}
generates a $C_0$-semigroup $\St$ on the Banach space $X$ which verifies the variation of parameters formula \eqref{eq-var-konst-form}.
\end{thm}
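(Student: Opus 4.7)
The plan is to reduce Theorem~\ref{thm-main} to Theorem~\ref{thm:WS} by showing that hypothesis~(iv') forces the $\rL^p\to\rL^p$ operator norm of the input-output map to be strictly less than $1$ on a sufficiently short time interval, so that the feedback-invertibility condition~(v) of Theorem~\ref{thm:WS} becomes automatic.

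Conditions~(i)--(iii) of Theorem~\ref{thm:WS} transfer from the fixed $\tn$ to any smaller horizon $t\in(0,\tn]$: (i) is time-independent; (iii) is immediate, since the integral is taken over a shorter interval; and (ii) follows by the standard shift-and-zero-extension---setting $\tilde u=0$ on $[0,\tn-t]$ and $\tilde u(s):=u(s-(\tn-t))$ on $[\tn-t,\tn]$ yields $\tilde u\in\LpntnU$ with $\int_0^\tn T_{-1}(\tn-s)B\tilde u(s)\ds=\int_0^t T_{-1}(t-s)Bu(s)\ds\in X$.

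The core step is a double H\"older interpolation. Density of $\rW^{2,\a}_0$ in $\rL^\a$ extends $\Ftn$, via (iv'), to a bounded operator $\rL^\a([0,\tn],\U)\to\rL^\b([0,\tn],\U)$ of norm at most $M^{1/\b}$. Causality of the defining formula (the value of $(\Ftn u)(r)$ depends only on $u|_{[0,r]}$), together with zero-extension from $[0,t]$ to $[0,\tn]$, yields the analogous bound for the input-output map $\sF_t$ on any subinterval $[0,t]$: $\|\sF_t u\|_{\rL^\b([0,t],\U)}\le M^{1/\b}\,\|u\|_{\rL^\a([0,t],\U)}$. Applying H\"older's inequality twice on $[0,t]$, exploiting the chain $\a\le p\le\b$, I obtain for $u\in\WnzpntU$
\begin{align*}
\|\sF_t u\|_{\rL^p([0,t],\U)}
&\le t^{1/p-1/\b}\,\|\sF_t u\|_{\rL^\b([0,t],\U)}\\
&\le t^{1/p-1/\b}\,M^{1/\b}\,\|u\|_{\rL^\a([0,t],\U)}\\
&\le M^{1/\b}\,t^{1/\a-1/\b}\,\|u\|_{\rL^p([0,t],\U)}.
\end{align*}
By density of $\WnzpntU$ in $\LpntU$, this bound extends by continuity to all of $\LpntU$, simultaneously verifying condition~(iv) of Theorem~\ref{thm:WS} for horizon $t$. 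Since $\a<\b$, the exponent $1/\a-1/\b$ is strictly positive, so $\|\sF_t\|_{\sL(\LpntU)}\to 0$ as $t\to 0^+$.

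It then suffices to pick $t_1\in(0,\tn]$ with $M^{1/\b}\,t_1^{1/\a-1/\b}<1$: the Neumann series inverts $Id-\sF_{t_1}$ on $\rL^p([0,t_1],\U)$, giving condition~(v), and Theorem~\ref{thm:WS} applied with time horizon $t_1$ delivers at once that $\ABC$ generates a $C_0$-semigroup $\St$ on $X$ satisfying the variation of parameters formula~\eqref{eq-var-konst-form}. The main technical point is the combination of causality and density used to restrict the $\rL^\a\to\rL^\b$ estimate from $[0,\tn]$ to arbitrary subintervals $[0,t]$; once that is in place, the double H\"older bound with positive exponent $1/\a-1/\b$ makes the reduction to Theorem~\ref{thm:WS} essentially immediate.
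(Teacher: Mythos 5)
Your proposal is correct and follows essentially the same route as the paper: reduce to Theorem~\ref{thm:WS} by extending the input--output map to a bounded operator from $\rL^\a$ to $\rL^\b$, restrict to subintervals $[0,t]$, and use Jensen/H\"older to show its $\rL^p\to\rL^p$ norm is $O\bigl(t^{\text{positive power}}\bigr)$, hence $<1$ for small $t_1$, which gives condition~(v). The only (cosmetic) difference is that you apply H\"older twice to obtain the uniform factor $t^{1/\a-1/\b}$, whereas the paper splits into the two cases $\a<p$ and $p<\b$ and applies the inequality once in each.
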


\begin{proof} By Theorem~\ref{thm:WS} it suffices to show that $1\in\rho(\Fte)$ for some $t_1>0$.
By assumption the operator
\begin{equation}\label{eq:Vtilde}
\widetilde\Vt:\WnzantU\subset\LantU \to\LbntU,\ \widetilde\Vt u:=C\int_0^\p\Tme(\p-s)Bu(s)\ds
\end{equation}
has a bounded extension $\Vt\in\sL\bigl(\LantU,\LbntU\bigr)$ for every $t\in(0,\tn]$. We distinguish 2 cases and use in both of them Jensen's inequality
\begin{equation}\label{eq:Jensen}
\|u\|_r\le t^{\frac1r-\frac1s}\cdot\|u\|_s
\end{equation}
for $1\le r<s\le+\infty$ and $u\in\LsntU\subset\LrntU$.

\emph{Case $\a<p$:} Then $\Vt$ belongs to $\sL\bigl(\LantU,\LpntU\bigr)$ with norm%
\footnote{We denote the norm of a bounded linear operator $F:\rL^r\to\rL^s$ by $\|F\|_{rs}$.}
$\|\Vt\|_{\a p}\le\|\Vtn\|_{\a p}$. This implies, by \eqref{eq:Jensen} for $r=\a$ and $s=p$, that
\[
\|\Vt u\|_p\le\|\Vt\|_{\a p}\cdot\|u\|_\a\le t^{\frac1\a-\frac1p}\cdot\|\Vtn\|_{\a p}\cdot\|u\|_p.
\]

\emph{Case $p<\b$:} In this case, $\Vt\in\sL\bigl(\LpntU,\LbntU\bigr)$ with norm
$\|\Vt\|_{p\b}\le\|\Vtn\|_{p\b}$. This implies, by \eqref{eq:Jensen} for $r=p$ and $s=\b$, that
\[
\|\Vt u\|_p\le t^{\frac1p-\frac1\b}\cdot\|\Vt u\|_\b\le t^{\frac1p-\frac1\b}\cdot\|\Vtn\|_{p\b}\cdot\|u\|_p.
\]

Hence in both cases, considering $\Ft:=\Vt|_{\LpntUs}\in\sL\bigl(\LpntU\bigr)$, we conclude that there exists $t_1>0$ such that $\|\Fte\|<1$ which implies $1\in\rho(\Fte)$.
\end{proof}

\begin{rem}\label{rem-comp}
As in Theorem~\ref{thm-main} assume that  $1\le\a\le p\le\b\le+\infty$ with $\a<\b$ and $1<p<+\infty$. If there exist $\tn>0$ and
a dense subspace $\D\subset\LantnU$ such that for every $u\in D$
\begin{itemize}
\item $\int_0^{r} T_{-1}(r-s) B u(s)\ds\in Z$ for almost all $0<r\le\tn$,
\item the map $[0,\tn]\ni r\mapsto C\int_0^{r} T_{-1}(r-s) B u(s)\ds$
is in $\LbntnU$, and
\item there exists $M\ge0$ such that
\[\int_0^\tn\Bigl\|C\int_0^{r} T_{-1}(r-s) B u(s)\ds\Bigr\|_U^\b\dr\le M\cdot\|u\|_\a^{\b}
\quad\text{for all }u\in\D,
\]
\end{itemize}
then also the compatibility condition \eqref{bild} is satisfied.

\smallbreak
To verify this assertion we define $\widetilde\Vt:D\subset\LantU \to\LbntU$ as in \eqref{eq:Vtilde} with $\WnzantU$ replaced by the space $D$. By assumption, $\widetilde\Vt$ has a unique bounded extension $\Vt:\LantU \to\LbntU$. As above take $\Ft:=\Vt|_{\LpntUs}\in\sL\bigl(\LpntU\bigr)$. Then, by H\"{o}lder's inequality (or \eqref{eq:Jensen} for $r=1$ and $s=\b$), we obtain for every $v\in U$\footnote{Here we define $(f\otimes x)(s):=f(s)\cdot x$ for all $s\in[0,\tn]$ where $f:[0,\tn]\to\CC$ is a scalar function. Moreover, $\eins$ denotes the constant one function on the interval $[0,\tn]$.}
\begin{align}
\biggl\|\frac1t\int_0^t(\Ft\, \eins\otimes v)(s)\ds\biggr\|
&\le\frac1t\int_0^t\bigl\|(\Ft\, \eins\otimes v)(s)\bigr\|\ds\notag\\
&=\frac1t\cdot\|\Vt\,\eins\otimes v\|_1\notag\\
&\le\frac1t\cdot t^{1-\frac1\b}\cdot\|\Vt\, \eins\otimes v\|_\b\notag\\
&\le t^{-\frac1\b}\cdot\|\Vt\|_{\a\b}\cdot\|\eins\otimes v\|_\a\notag\\
&\le t^{\frac1\a-\frac1\b}\cdot\|\Vtn\|_{\a\b}\cdot\|v\|_U\notag\\
&\to0\quad\text{as }t\to0^+.\label{eq:regular}
\end{align}
By \cite[Thm.~5.8]{Wei:94b} in the Hilbert space case or \cite[Thms.~5.6.4 \& 5.6.5]{Sta:05} in the general case this convergence implies the compatibility condition \eqref{bild}.
\end{rem}

%
%

%
%

\section{Applications}\label{sec:app}

We now give some applications of our abstract results. First we show that Theorem~\ref{thm-main} can be considered as a simultaneous generalization of the Desch--Schappacher and the Miyadera-Voigt perturbation theorems. Moreover, we generalize a result of Greiner concerning the perturbation of the boundary conditions of a generator.

\subsection{The Desch--Schappacher Perturbation Theorem}
The following result was proved in \cite[Thm.~5, Prop.8]{DS:89}, see also \cite[Cor.~III.3.4]{EN:00} and \cite[Cor.~5.5.1]{WT:09}. 

\begin{thm}[Desch--Schappacher, 1989]\label{thm:DS} Assume that for $B\in\sL(X,\Xme)$ there exist $1\le p<+\infty$, $t_0>0$ and $M\ge0$ such that
\begin{equation}\label{vor-1X}
\int_0^{t_0} T_{-1}(t_0-s) B u(s)\ds \in X\quad\text{for all }u\in\LpntnX.
\end{equation}
Then $(A_B,D(A_B))$ given by
\[
A_Bx:=(\Ame+B)x,\quad
D(A_B):=\bigl\{x\in X:(\Ame+B)x\in X\bigr\}
\]
is the generator of a $C_0$-semigroup $\St$ on $X$.
\end{thm}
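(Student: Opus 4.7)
The plan is to derive Theorem~\ref{thm:DS} as a direct specialization of Theorem~\ref{thm-main} in which the observation side of the control system $\Sigma(A,B,C)$ is trivialized. Concretely, I set $U=X$ and $Z=X$, and take $C:=\mathrm{Id}_X\in\sL(Z,U)$ as the observation operator while $B$ plays the role of the control operator as supplied by the hypothesis. Under these identifications $BC=B$ on $Z=X$, so the operator $\ABC$ defined in \eqref{eq:def-A_BC} coincides exactly with $A_B$ from the statement, and the conclusion of Theorem~\ref{thm-main} translates verbatim into Theorem~\ref{thm:DS}.

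Since $p$-admissibility of $B$ becomes less restrictive as $p$ increases (cf. the comment following the definition of $p$-admissibility of control operators in Section~\ref{sec:admiss}), I may without loss of generality assume $p>1$, matching the parameter constraint of Theorem~\ref{thm-main}. Conditions (i)--(iii) of Theorem~\ref{thm-main} are then straightforward: compatibility (i) holds because $\RlAme B\in\sL(X)$ trivially maps into $Z=X$; condition (ii) is precisely the Desch--Schappacher hypothesis \eqref{vor-1X}; and condition (iii) with $C=\mathrm{Id}$ reduces to
\[
\int_0^\tn\|T(s)x\|_X^p\ds\le\tn\cdot\sup_{s\in[0,\tn]}\|T(s)\|^p\cdot\|x\|_X^p,
\]
which holds by local boundedness of $\Tt$.

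The one nontrivial verification is (iv'). By Remark~\ref{rem:Btn} the controllability map $\Btn\in\sL(\LpntnX,X)$ is bounded. A short zero-extension-and-time-shift argument---given $v\in\rL^p([0,r],X)$, set $u\in\LpntnX$ equal to $0$ on $[0,\tn-r]$ and equal to $v(\cdot-(\tn-r))$ on $[\tn-r,\tn]$, so that $\Btn u=\sB_r v$ and $\|u\|_p=\|v\|_p$---yields the uniform bound $\|\sB_r v\|_X\le\|\Btn\|\cdot\|v\|_p$ for every $r\in[0,\tn]$. Choosing $\a:=p$ and any $\b>p$, I obtain
\[
\int_0^\tn\Bigl\|\int_0^r\Tme(r-s)Bu(s)\ds\Bigr\|_X^\b\dr\le\tn\cdot\|\Btn\|^\b\cdot\|u\|_\a^\b\qquad\text{for every }u\in\WnzantnU,
\]
which is exactly condition (iv'). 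Theorem~\ref{thm-main} then applies and delivers that $A_B=\ABC$ generates a $C_0$-semigroup on $X$.

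I do not foresee any serious obstacle. The only point requiring care is the parameter $p$: the Desch--Schappacher hypothesis admits $p=1$ while Theorem~\ref{thm-main} requires $p>1$, which is resolved cleanly by raising $p$ at the outset. Once $C$ is collapsed to the identity the joint $p$-admissibility condition essentially reduces to $p$-admissibility of $B$, and the $\a<\b$ gap built into Theorem~\ref{thm-main} automatically supplies the invertibility of $\mathrm{Id}-\Ftn$ that would otherwise be required as condition (v) of Theorem~\ref{thm:WS}.
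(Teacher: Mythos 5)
Your proposal is correct and follows essentially the same route as the paper: set $U=Z=X$, $C=\mathrm{Id}$, and verify condition (iv') of Theorem~\ref{thm-main} with $\a=p$ and $\b>p$ via the uniform bound $\|\sB_r v\|_X\le\|\Btn\|\cdot\|v\|_p$ obtained from the zero-extension-and-shift trick, which is exactly the paper's estimate $\|(\Vtn u)(r)\|\le\|\Btn\|\cdot\|u_r\|_p\le\|\Btn\|\cdot\|u\|_p$. Your explicit reduction of the $p=1$ case to $p>1$ is a point the paper's proof leaves implicit, and it is handled correctly.
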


We remark that one could consider the condition~\eqref{vor-1X} also for $p=\infty$ or $u\in\rC\bigl([0,\tn],U\bigr)$. However in this case one needs an additional norm estimate to ensure that condition (v) in Theorem~\ref{thm:WS} is satisfied, cf.  \cite[Cor.~III.3.3]{EN:00}. Moreover, we note that in a certain sense the Desch-Schappacher Theorem depends only on the range but not on the ``size'' of the perturbation $B$. In particular, if $B$ satisfies the assumption of Theorem~\ref{thm:DS}, then also $BF$ satisfies it for every $F\in\sL(X)$.


\begin{proof}[Proof of Theorem~\ref{thm:DS}]
Let $U=Z=X$ and $C=Id$. Then by assumption $B\in\sL(X,\Xme)$ is a $p$-admissible control operator and conditions~(i)--(iii) in Theorem~\ref{thm:WS} are clearly satisfied. We will prove that (ii) implies condition~(iv') from Theorem~\ref{thm-main}.
To this end we first verify that the function
\[
[0,\tn]\ni r\mapsto v(r):=\int_0^{r} T_{-1}(r-s) B u(s)\ds\in X
\]
is continuous for every $u\in\LpntnX$. For such $u$ define first $u_t:[0,\tn]\to U$ by
\begin{equation}\label{def-u_t}
u_t(s):=
\begin{cases}
0&\text{if }0\le s\le\tn-t\\
u(s-\tn+t)&\text{if }\tn-t< s\le\tn,
\end{cases}
\end{equation}
i.e., $u_t$ is just the right translation of $u$ by $\tn-t$.
Then $u_t\in\LpntnX$ and using Remark~\ref{rem:Btn} we obtain from $v(r)=\Btn u_r$ that for $r_0,\,r_1\in[0,\tn]$
\begin{equation*}
\|v(r_0)-v(r_1)\|=\|\Btn(u_{r_0}-u_{r_1})\|\le\|\Btn\|\cdot\|u_{r_0}-u_{r_1}\|_p\to0\quad\text{as }r_1\to r_0,
\end{equation*}
where the last step follows from the strong continuity of the nilpotent right translation semigroup on $\LpntnX$. Next we define the operator
\[
\Vtn:\LpntnX\to\CntnX,
\quad
(\Vtn u)(r):=\int_0^{r} T_{-1}(r-s) B u(s)\ds,\ r\in[0,\tn].
\]
By what we just showed, $\Vtn$ is well-defined. Moreover, the estimate
\[
\|(\Vtn u)(r)\|\le\|\Btn\|\cdot\|u_r\|_p\le\|\Btn\|\cdot\|u\|_p\quad\text{for all }u\in\LpntnX,\ r\in[0,\tn]
\]
shows that $\Vtn\in\sL\bigl(\LpntnX,\CntnX\bigr)\subset\sL\bigl(\LpntnX,\LbntnX\bigr)$ for all $\beta\ge1$. Choosing $\beta>p$ this implies condition~(iv') and hence the proof is complete.
\end{proof}

\begin{rem}
The proofs of Theorems~\ref{thm-main} and \ref{thm:DS} imply the following: If $B\in\sL(U,\Xme)$ is a $p$-admissible control operator then for every bounded $C\in\sL(X,U)$ the triple $(A,B,C)$ is compatible and jointly $p$-admissible. Moreover, in this case every $F\in\sL(U)$ is a $p$-admissible feedback operator for the system \ref{csu}.
\end{rem}

\subsection{The Miyadera--Voigt Perturbation Theorem}

As another application we consider the following version of the Miyadera--Voigt perturbation theorem, cf. \cite{Miy:66} and \cite{Voi:77}, see also \cite[Cor.~III.3.16]{EN:00} and \cite[Thm.~5.4.2]{WT:09}.

\begin{thm}[Miyadera 1966/Voigt 1977]\label{thm:MV} Assume that for $C\in\sL(X_1^A,X)$ there exist $1<p<+\infty$, $t_0>0$ and $M\ge0$ such that
\begin{equation}\label{vor-2X}
\int_0^{t_0}\bigl\|CT(s)x\bigr\|_X^p\ds \leq M\cdot\|x\|_X^p
\quad\text{for all }x\in D(A).
\end{equation}
Then $(A_C,D(A_C))$ given by
\[
A_Cx:=(A+C)x,\quad
D(A_C):=D(A),
\]
is the generator of a $C_0$-semigroup on $X$.
\end{thm}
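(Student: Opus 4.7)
The plan is to apply Theorem~\ref{thm-main} to the triple $(A, \mathrm{Id}, C)$ with $U = Y = X$, $Z := X_1^A$, and $B := \mathrm{Id} \in \sL(X) \subset \sL(X, \Xme)$, so that $C \in \sL(Z, Y)$ by hypothesis. Conditions (i)--(iii) of Theorem~\ref{thm:WS} are immediate: the compatibility condition (i) holds trivially since $R(\lambda, \Ame)\mathrm{Id} = R(\lambda, A)$ has range $D(A) = Z$; the $p$-admissibility of $B = \mathrm{Id}$ required in (ii) is clear because $\int_0^\tn T(\tn - s) u(s)\ds$ already lies in $X$ for every $u \in \LpntnX$; and (iii) is precisely the Miyadera--Voigt hypothesis \eqref{vor-2X}.

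The core of the proof is then condition (iv'), which I would verify with the choice $\a = 1$ and $\b = p$. This choice is admissible because $\a \le p \le \b$, and, since $p > 1$, also $\a < \b$. For test functions $u \in \rC_c^\infty\bigl((0, \tn), D(A)\bigr)$ the integrand $T(r-s) u(s)$ stays in $D(A)$ for every $s$, so $C$ can be moved inside the integral, and Minkowski's integral inequality together with \eqref{vor-2X} yields
\[
\Bigl(\int_0^\tn \Bigl\| C \int_0^r T(r-s) u(s) \ds \Bigr\|^p \dr \Bigr)^{1/p}
\le \int_0^\tn \Bigl(\int_0^{\tn - s} \|CT(\tau) u(s)\|^p d\tau\Bigr)^{1/p} \ds
\le M^{1/p} \|u\|_1 .
\]
To extend this bound to all $u \in \rW^{2,1}_0([0, \tn], X)$, I would use the explicit representation of $r \mapsto \int_0^r T_{-1}(r-s) u(s)\ds$ obtained by integrating twice by parts (as in Remark~\ref{rem:Btn}): it shows that the left-hand side above depends continuously on $u$ in the $\rL^1$-norm, so density of $\rC_c^\infty\bigl((0, \tn), D(A)\bigr)$ in $\rW^{2,1}_0([0, \tn], X)$ completes (iv').

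Finally, I would identify $\ABC$ with $A_C$: for $x \in D(A) = Z$ one has $(\Ame + BC)x = Ax + Cx \in X$, while conversely any $x \in Z$ with $(\Ame + BC)x \in X$ automatically lies in $D(A)$ since $Cx \in X$ is granted by $C \in \sL(Z, X)$. Theorem~\ref{thm-main} thus produces the desired $C_0$-semigroup on $X$.

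The main obstacle I expect is the density step in (iv'): one has to justify carefully the interchange of $C$ with the Bochner integral on a dense set of regular controls and then pass to the limit using the integration-by-parts formula. Everything else is routine bookkeeping inside the Weiss--Staffans framework developed in Sections~\ref{sec:admiss}--\ref{sec:SW}.
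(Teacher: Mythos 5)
Your proposal is correct and follows essentially the same route as the paper: reduce to Theorem~\ref{thm-main} with $U=X$, $Z=X_1^A$, $B=\mathrm{Id}$, and verify condition (iv') with $\a=1$ and $\b=p$. The only difference lies in the key estimate: the paper works with $D(A)$-valued step functions and combines H\"{o}lder's inequality with the triangle inequality in $\rL^p$, whereas you apply Minkowski's integral inequality to smooth test functions --- an equivalent and arguably cleaner derivation of the same $\rL^1\to\rL^p$ bound, followed by the same (standard, and equally lightly treated in the paper) density argument.
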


We remark that one could consider condition~\eqref{vor-2X} also for $p=1$. However in this case one needs $M<1$ to ensure that condition (v) in Theorem~\ref{thm:WS} is satisfied, cf. \cite[Cor.~III.3.16]{EN:00}. Moreover, we note that in a certain sense the Miyadera-Voigt Theorem~\ref{thm:MV} (for $p>1$) depends only on the domain but not on the ``size'' of the perturbation $C$. In particular, if $C$ satisfies the assumption of Theorem~\ref{thm:MV}, then also $FC$ satisfies it for every $F\in\sL(X)$.

\begin{proof}[Proof of Theorem~\ref{thm:MV}]
Let $U=X$, $Z=X_1^A$ and $B=Id$. Then, by assumption, $C\in\sL(Z,X)$ is a $p$-admissible observation operator and conditions~(i)--(iii) in Theorem~\ref{thm:WS} are clearly satisfied. We will show that condition~ (iii) implies condition~(iv') from Theorem~\ref{thm-main}. To this end fix $0\le\aaa<\bb\le\tn$ and $x\in D(A)$. Then for $u=\eab\otimes x$ we obtain
\[
C\int_0^{r} T(r-s) u(s)\ds
=CA^{-1}\int_\aaa^r\eab(s)\cdot T(r-s)Ax\ds
=\int_\aaa^r\eab(s)\cdot CT(r-s)x\ds.
\]
Using this, condition~(iii), the triangle- and  H\"{o}lder's inequality
\[
\l(\int_a^b |f(s)|\ds\r)^p\le(b-a)^{p-1}\int_a^b |f(s)|^p\ds
\]
for $f\in\rL^1(a,b)$ and $p\ge1$ we obtain
\begin{align}
\int_0^\tn\Bigl\|&C\int_0^{r} T(r-s)u(s)\ds\Bigr\|_X^p\dr
\le\int_\aaa^\tn\Bigl(\int_\aaa^{r} \eab(s)\cdot\bigl\|CT(r-s)x\bigr\|_X\ds\Bigr)^p\dr\notag\\
&=\int_\aaa^\bb\Bigl(\int_\aaa^{r} \bigl\|CT(r-s)x\bigr\|_X\ds\Bigr)^p\dr
 +\int_\bb^\tn\Bigl(\int_\aaa^{\bb} \bigl\|CT(r-s)x\bigr\|_X\ds\Bigr)^p\dr\notag\\
&\le\int_\aaa^\bb(r-\aaa)^{p-1}\int_\aaa^{r}\bigl\|CT(r-s)x\bigr\|_X^p\ds\dr
 +\int_\bb^\tn\!\!\!\int_\aaa^{\bb}(\bb-\aaa)^{p-1}\bigl\|CT(r-s)x\bigr\|_X^p\ds\dr\notag\\
&\le\int_\aaa^\bb(r-\aaa)^{p-1}M\cdot\|x\|^p\dr
 +\int_\aaa^{\bb}(\bb-\aaa)^{p-1}\int_\bb^\tn\bigl\|CT(r-s)x\bigr\|_X^p\dr\ds\notag\\
&\le \tfrac{M}{p}\cdot(\bb-\aaa)^p\cdot\|x\|^p
 +\int_\aaa^{\bb}(\bb-\aaa)^{p-1}M\cdot\|x\|^p\ds\notag\\
&=  M\bigl(1+\tfrac{1}{p}\bigr)\cdot(\bb-\aaa)^p\cdot\|x\|^p
=:K^p\cdot(\bb-\aaa)^p\cdot\|x\|^p.\label{eq:est-Vt-Cadm}
\end{align}
Let now $u=\sum_{k=1}^n\eabk\otimes x_k\in\LentnX$ be a step function where the intervals $[\aaa_k,\bb_k]\subset[0,\tn]$ are pairwise disjoint and $x_k\in D(A)$ for $k=1\ldots n$. Then from \eqref{eq:est-Vt-Cadm} we obtain
\begin{align*}
\l(\int_0^\tn\Bigl\|C\int_0^{r} T(r-s)u(s)\ds\Bigr\|_X^p\dr\r)^{\frac1p}
&\le K\cdot\sum_{k=1}^n (\bb_k-\aaa_k)\cdot\|x_k\|_X\\
&=K\cdot\|u\|_1.
\end{align*}
Since the step functions having values in $D(A)$ are dense in $\LentnX$, this implies condition~(iv') for $\alpha=1$ and $\beta=p$. This completes the proof.
\end{proof}


\begin{rem}
The proofs of Theorems~\ref{thm-main} and \ref{thm:MV} imply the following: If $C\in\sL(Z,U)$ is a $p$-admissible observation operator then for every bounded $B\in\sL(U,X)$ the triple $(A,B,C)$ is compatible and jointly $p$-admissible. Moreover, in this case every $F\in\sL(U)$ is a $p$-admissible feedback operator for the system \ref{csu}.
\end{rem}

\subsection{Perturbing the Boundary Conditions of a Generator}

In this section we show how Theorem~\ref{thm:WS} can be used to generalize results by Greiner in \cite{Gre:87} on the perturbation of boundary conditions of a generator. 
\goodbreak
To explain the general setup we consider
\begin{itemize}
\item two Banach spaces\footnote{In this section we denote the elements of $X$ by $f$ instead of $x$.}
 $X$ and $\partial X$, the latter called ``boundary space'';
\item a closed, densely defined ``maximal'' operator\footnote{``maximal'' concerns the size of the domain, e.g., a differential operator without boundary conditions.} $A_m:D(A_m)\subseteq X\to
X$;
\item the Banach space $[D(A_m)]:=(D(A_m),\|\cdot\|_{A_m})$ where $\|x\|_{A_m}:=\|x\|+\|A_mx\|$ is the graph norm;
\item two ``boundary'' operators $L,\Phi\in\sL([D(A_m)],\partial X)$.
\end{itemize}

Then we define two restrictions $A,\,A^\Phi\subset A_m$ by
\begin{align*}
D(A):&=\bigl\{f\in D(A_m):Lf=0\bigr\}=\ker L,\\
D(A^\Phi):&=\bigl\{f\in D(A_m):Lf=\Phi f\bigr\}.
\end{align*}
In many applications $X$, $\partial X$ and $D(A_m)$ are function spaces and $L$ is a trace-type operator which restricts a function in $D(A_m)$ to (a part of) the boundary of its domain. Hence we can consider $A^\Phi$ with boundary condition $Lf=\Phi f$ as a perturbation of the operator $A$ with abstract boundary condition $Lf=0$.

In order to treat this setup within our framework  we make the following assumptions.

\begin{enumerate}[(i)]
\item The operator $A$ generates a strongly continuous semigroup $\Tt$ on $X$;
\item the boundary operator $L:D(A_m)\to\partial X$ is surjective.
\end{enumerate}

Under these assumptions the following lemma, shown by
Greiner \cite[Lem.~1.2]{Gre:87}, is the key to write $A^\Phi$ as a Staffans--Weiss type perturbation of $A$.

\begin{lem}\label{lem-Gre} Let the above assumptions (i) and (ii) be
satisfied. Then for each $\lambda\in\rho(A)$ the operator $L|_{\ker(\lambda-A_m)}$ is invertible and
$D_\lambda:=(L|_{\ker(\lambda-A_m)})^{-1}:\partial X\to\ker(\lambda-A_m)\subseteq X$
is bounded.
\end{lem}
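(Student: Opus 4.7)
My plan is to verify that $L|_{\ker(\lambda - A_m)}$ is a bijection onto $\partial X$ with bounded inverse. Fix $\lambda \in \rho(A)$. For injectivity, I would observe that any $f \in \ker(\lambda - A_m)$ satisfying $Lf = 0$ automatically lies in $D(A) = \ker L$. Since $A_m$ restricts to $A$ on $D(A)$, this gives $(\lambda - A) f = 0$, which together with $\lambda \in \rho(A)$ forces $f = 0$.

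For surjectivity, given $\varphi \in \partial X$, I would use assumption (ii) to pick some $g \in D(A_m)$ with $Lg = \varphi$, and then correct $g$ by an element of $D(A)$ so as to land inside $\ker(\lambda - A_m)$. The natural ansatz is
\[
f := g - R(\lambda, A)(\lambda - A_m) g.
\]
Since $(\lambda - A_m) g \in X$, the correction term lies in $D(A) \subset D(A_m)$, so $f \in D(A_m)$. Using that $A$ is the restriction of $A_m$ to $\ker L$, the identity $(\lambda - A_m) R(\lambda, A) h = h$ holds for every $h \in X$, which gives $(\lambda - A_m) f = 0$. Moreover, the correction term belongs to $D(A) = \ker L$, so $Lf = Lg = \varphi$.

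Finally, $\ker(\lambda - A_m)$ is closed in the Banach space $[D(A_m)]$ because $\lambda - A_m$ is continuous from $[D(A_m)]$ into $X$. The restriction $L|_{\ker(\lambda - A_m)}$ is then a continuous bijection between Banach spaces, so the open mapping theorem yields that $D_\lambda$ is bounded as a map $\partial X \to [D(A_m)]$. On $\ker(\lambda - A_m)$ the graph norm equals $(1 + |\lambda|)\,\|\cdot\|_X$, so $D_\lambda$ is a fortiori bounded as a map into $X$. The whole argument is quite routine; the only step requiring any insight is the correction ansatz for surjectivity, and I do not anticipate a real obstacle beyond writing it out carefully.
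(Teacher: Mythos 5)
Your argument is correct and complete: injectivity via $\ker(\lambda-A_m)\cap\ker L\subseteq\ker(\lambda-A)=\{0\}$, surjectivity via the correction ansatz $f=g-R(\lambda,A)(\lambda-A_m)g$, and boundedness via the open mapping theorem on the closed subspace $\ker(\lambda-A_m)$ of $[D(A_m)]$, where the graph norm reduces to $(1+|\lambda|)\|\cdot\|_X$. The paper does not prove this lemma itself but cites Greiner \cite{Gre:87}, and your proof is exactly the standard one, implicitly establishing the decomposition $D(A_m)=D(A)\oplus\ker(\lambda-A_m)$.
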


Using this so-called \emph{Dirichlet operator} $D_\lambda$ we obtain the following representation of $A^\Phi$ where for simplicity we assume that $A$ is invertible.

\begin{lem}
If $0\in\rho(A)$, then
\begin{equation}\label{eq:def-A-Phi}
A^\Phi=\bigl(\Ame-\Ame D_0\cdot\Phi\bigr)|_X,
\end{equation}
i.e., $A^\Phi=\ABC$ for $U:=\partial X$, $Z:=[D(A_m)]$ and
\[
B:=-\Ame D_0\in\sL(U,\Xme),\qquad C:=\Phi\in\sL(Z,U).
\]
\end{lem}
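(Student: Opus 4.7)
The plan is to reduce everything to the decomposition $f=(f-D_0Lf)+D_0Lf$ which is available for every $f\in D(A_m)$ thanks to Greiner's lemma. Since $LD_\lambda = \mathrm{Id}_{\partial X}$ (by the very definition of $D_\lambda$ in Lemma~\ref{lem-Gre}), the first summand lies in $\ker L = D(A)$, while the second lies in $\ker A_m$ because $D_0$ takes values in $\ker(0-A_m)$. Applying the extrapolated operator $\Ame$ to this decomposition (where $\Ame|_{D(A)}=A$) then yields the key identity
\begin{equation*}
\Ame f = A_m f + \Ame D_0 L f\qquad\text{for all }f\in D(A_m).
\end{equation*}

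Next I would subtract $\Ame D_0\Phi f$ from both sides to obtain
\begin{equation*}
\bigl(\Ame - \Ame D_0\cdot\Phi\bigr) f = A_m f + \Ame D_0\bigl(Lf-\Phi f\bigr)\qquad\text{for all }f\in D(A_m)=Z.
\end{equation*}
Since $A_m f\in X$ always, the right-hand side belongs to $X$ if and only if $\Ame D_0(Lf-\Phi f)\in X$. Here I would invoke the crucial fact that, because $0\in\rho(A)$, the extrapolated operator $\Ame:X\to\Xme$ satisfies $\Ame^{-1}|_X = A^{-1}: X\to D(A)$, so for any $h\in X$ one has $\Ame h\in X\iff h\in D(A)$. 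Hence $\Ame D_0 y\in X$ if and only if $D_0 y\in D(A)=\ker L$, and applying $L$ gives $y = L D_0 y = 0$. Thus the condition $\Ame D_0(Lf-\Phi f)\in X$ is equivalent to $Lf=\Phi f$.

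Putting the pieces together: $f\in D(A_{BC})$ means exactly $f\in D(A_m)$ together with $Lf=\Phi f$, i.e., $f\in D(A^\Phi)$, and on this common domain both $A_{BC} f$ and $A^\Phi f$ equal $A_m f$. This gives $A^\Phi = A_{BC}$ with $B=-\Ame D_0$, $C=\Phi$, $U=\partial X$, $Z=[D(A_m)]$ as claimed. The only non-routine step is the equivalence $\Ame D_0 y\in X \iff y=0$; the rest is the algebra of Greiner's decomposition.
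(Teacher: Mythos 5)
Your proof is correct and rests on essentially the same ingredients as the paper's: the characterization $\Ame h\in X\iff h\in D(A)$ (valid since $0\in\rho(A)$), the identity $LD_0=\mathrm{Id}_{\partial X}$ from Greiner's lemma, and $\rg D_0\subset\ker A_m$. The paper gets there a bit more directly---$f\in D(\ABC)\iff \Ame(f-D_0\Phi f)\in X\iff f-D_0\Phi f\in D(A)=\ker L\iff Lf=\Phi f$---without passing through your intermediate identity $\Ame f=A_mf+\Ame D_0Lf$, but the two arguments are the same in substance.
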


\begin{proof}
Denote the operator on the right-hand side of \eqref{eq:def-A-Phi} by $\tilde A^\Phi$. Then
\begin{align*}
f\in D\bigl(\tilde A^\Phi\bigr)
&\iff f-D_0\Phi f\in D(A)\\
&\iff Lf=LD_0\Phi f=\Phi f\\
&\iff f\in D\bigl(A^\Phi\bigr).
\end{align*}
Moreover, for $f\in D(A^\Phi)$ we have
\[
\tilde A^\Phi f=A( f-D_0\Phi f)=A_m( f-D_0\Phi f)=A_mf=A^\Phi f
\]
as claimed.
\end{proof}

We mention that in \cite[Thm.~2.1]{Gre:87} the operator $\Phi\in\sL(X,U)$ is bounded and the assumptions imply that  $\Ame D_0$ is a $1$-admissible control operator. Hence in this case $A^\Phi$ is a generator by the Desch--Schappacher theorem.

\smallbreak
By using Theorem~\ref{thm:WS} we can now deal with unbounded $\Phi$.

\begin{cor}\label{cor:GG-ubdd}  Assume that  for some $1\le p<+\infty$ the pair $(\Ame D_0,\Phi)$ is jointly $p$-admissible and that $Id\in\sL(\partial X)$ is a $p$-admissible feedback operator for $A$. Then $A^\Phi$ is the generator of a $C_0$-semigroup on $X$.
\end{cor}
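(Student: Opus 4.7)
The plan is to recognize, via the preceding lemma, that $A^\Phi$ coincides with the abstract perturbed operator $\ABC$ of Theorem~\ref{thm:WS} for the choice $U:=\partial X$, $Z:=[D(A_m)]$, $B:=-\Ame D_0\in\sL(U,\Xme)$ and $C:=\Phi\in\sL(Z,U)$, and then simply to verify that the five hypotheses of Theorem~\ref{thm:WS} are in force. The admissibility hypotheses (ii), (iii) and (iv) are exactly what the joint $p$-admissibility of $(\Ame D_0,\Phi)$ encodes, the sign change from $\Ame D_0$ to $B=-\Ame D_0$ being immaterial since it only multiplies the relevant integrals by $-1$; condition (v) is assumed outright.

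What remains is the compatibility condition (i), i.e., $\rg(\RlAme B)\subset Z=[D(A_m)]$ for some $\lambda\in\rho(A)$. The key tool here is Lemma~\ref{lem-Gre}, which gives $D_0 v\in\ker(A_m)\subset D(A_m)\subset X$ for every $v\in\partial X$. Since $D_0 v\in X=D(\Ame)$, I would apply the identity $R(\lambda,\Ame)(\lambda-\Ame)f=f$ to $f=D_0 v$ and rearrange to obtain
\[
\RlAme B v \;=\; -\RlAme\,\Ame D_0 v \;=\; D_0 v - \lambda\, R(\lambda,A) D_0 v.
\]
The first summand lies in $D(A_m)$ because $D_0 v\in\ker(A_m)$, while the second lies in $D(A)\subset D(A_m)$, so the whole expression belongs to $Z$; this establishes (i).

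With hypotheses (i)--(v) of Theorem~\ref{thm:WS} verified, that theorem immediately delivers the conclusion: $\ABC=A^\Phi$ generates a $C_0$-semigroup on $X$. Because the heavy lifting has already been done upstream in the proof of Theorem~\ref{thm:WS}, I do not anticipate any real obstacle here; the corollary is essentially a bookkeeping exercise, with the only active computation being the short resolvent manipulation used to check compatibility.
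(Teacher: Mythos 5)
Your proposal is correct and follows essentially the same route as the paper: the joint $p$-admissibility and feedback assumptions dispose of conditions (ii)--(v), and the only thing to check is compatibility, which the paper also establishes via $\RlAme B=(Id-\lambda\RlA)D_0$ with range in $\ker(A_m)+D(A)\subseteq D(A_m)=Z$ --- exactly your resolvent manipulation. No gaps.
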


\begin{proof} We only have to show the compatibility condition~\eqref{bild}. This, however, immediately follows from
\[
\rg\bigl(\RlAme B\bigr)=
\rg\bigl((Id-\lambda\RlA)D_0\bigr)\subset
\ker(A_m)+D(A)\subseteq D(A_m)=
Z.\qedhere
\]
\end{proof}

\begin{rem} We note that in \cite[Thm.~4.1]{HMR:14} the authors study a similar problem in the context of regular linear systems.
\end{rem}

As a simple but typical example for the previous corollary we consider the space $X:=\Lpne$ and the first derivative $A_m:=\frac{d}{ds}$ with domain $D(A_m):=\Wepne$ (c.f. \cite[Expl.~1.1.(c)]{Gre:87}). As boundary space we choose $\partial X=\CC$, as boundary operators the point evaluation $L=\delta_1$ and as perturbation some $\Phi \in\bigl(\Wepne\bigr)'$. This gives rise to the differential operators $A,\,A^\Phi\subset\frac{d}{ds}$ with domains
\begin{align*}
D(A):&=\bigl\{f\in\Wepne:f(1)=0\bigr\},\\
D(A^\Phi):&=\bigl\{f\in\Wepne:f(1)=\Phi f\bigr\}.
\end{align*}
Then clearly the assumptions~(i) and (ii) made above are satisfied, in particular $A$ generates the nilpotent left-shift semigroup given by
\[
\bigl(T(t)f\bigr)(s)=
\begin{cases}
f(s+t)&\text{if }s+t\le1,\\
0&\text{else}.
\end{cases}
\]
However, $A^\Phi$ is not always a generator. For example if $\Phi=\delta_1$, then $A^\Phi=A_m$ and $\sigma(A^\Phi)=\CC$, hence $A^\Phi$ cannot be a generator. Thus we need an additional assumption on $\Phi$.

\begin{defn} A bounded linear functional $\Phi:\Cne\to\CC$ has little mass in $r=1$ if there exist $q<1$ and $\delta>0$ such that
\[
|\Phi f|\le q\cdot\|f\|_\infty
\]
for every $f\in\Cne$ satisfying $\supp f\subset[1-\delta,1]$.
\end{defn}

Note that $\Wepne\inc\Cne$ and hence $(\Cne)'\subset[D(A_m)]'$. Now the following holds.

\begin{cor}\label{cor:gen-nmiz}
If $\Phi\in\bigl(\Cne\bigr)'$ has little mass in $r=1$, then for all $1\le p<+\infty$ the operator $A^\Phi$ is the generator of a strongly continuous semigroup on $\Lpne$.
\end{cor}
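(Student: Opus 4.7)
The plan is to apply Corollary~\ref{cor:GG-ubdd} to the concrete setup described just above it, i.e.\ with $X=\Lpne$, $\partial X=U:=\CC$, $A_m=\tfrac{d}{ds}$, $L=\delta_1$, so that $Z=\Wepne$, $B:=-\Ame D_0\in\sL(\CC,\Xme)$ and $C:=\Phi$. I therefore need to verify that $(B,\Phi)$ is jointly $p$-admissible and that $Id\in\sL(\CC)$ is a $p$-admissible feedback operator; compatibility is already subsumed by the statement of Corollary~\ref{cor:GG-ubdd}.

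The central computation is an explicit closed form for the controllability map. Since $D_0c=c\cdot\eins$ (the constant function), integration by parts precisely as in Remark~\ref{rem:Btn} should yield, for $u\in\WnzpntnU$ with $U=\CC$,
\[
\Bigl(\int_0^r T_{-1}(r-s)Bu(s)\ds\Bigr)(\sigma)
=\begin{cases}0,&\sigma+r\le 1,\\ u(\sigma+r-1),&\sigma+r>1,\end{cases}
\]
i.e.\ $\sB_r u$ is just the right translation of $u$ by $1-r$, extended by zero on $[0,1-r]$. This makes $\sB_{t_0}\colon\Lpntn\to X$ an isometric embedding, so $B$ is $p$-admissible. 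For admissibility of $\Phi$, I would represent $\Phi f=\int_{[0,1]}f\,d\mu$ via Riesz, use $(T(s)f)(\sigma)=f(\sigma+s)\eins_{[0,1-s]}(\sigma)$, and apply Minkowski's integral inequality to $\int_0^{t_0}|\Phi T(s)f|^p\ds$ to obtain $\|\sC_{t_0} f\|_p\le|\mu|([0,1])\cdot\|f\|_p$ for all $f\in D(A)$.

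Composing these two explicit formulas I expect to find that the input-output map is a convolution:
\[
(\sF_{t_0}u)(r)=\int_{(1-r,1]}u(\sigma+r-1)\,d\mu(\sigma)=\int_{[0,r)}u(r-\tau)\,d\nu(\tau),\qquad r\in[0,t_0],
\]
where $\nu$ is the pushforward of $\mu$ under the reflection $\sigma\mapsto 1-\sigma$, so that $|\nu|([0,\eta])=|\mu|([1-\eta,1])$ for every $\eta\in[0,1]$. Young's convolution inequality for measures then gives joint $p$-admissibility with constant $|\mu|([0,1])$. The feedback admissibility of $Id$ is where the little mass hypothesis enters: choosing $t_0\le\delta$ confines the convolution variable to $\tau\in[0,\delta]$, and Young's inequality yields
\[
\|\sF_{t_0}\|_{\sL(\Lpntn)}\le|\nu|([0,\delta])=|\mu|([1-\delta,1])\le q<1,
\]
so $1\in\rho(\sF_{t_0})$ by the Neumann series and Corollary~\ref{cor:GG-ubdd} concludes the proof.

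The main obstacle I anticipate is the explicit identification of the convolution structure of $\sF_{t_0}$: performing the integration by parts for $\sB_r$ on smooth $u$, carrying out the change of variables $\tau=1-\sigma$ to exhibit $\nu$, and observing that the at most countably many $r$ for which $\mu$ has an atom at $1-r$ form a null set and hence are irrelevant for $L^p$-computations. Once this structural step is in place, translating the little mass hypothesis into the decisive operator bound $\|\sF_{t_0}\|<1$ is immediate via Young, and the remaining admissibility conditions are routine consequences of the same closed-form formulas.
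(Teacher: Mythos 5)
Your proposal is correct and follows essentially the same route as the paper: both reduce to Corollary~\ref{cor:GG-ubdd}, derive the same explicit translation formula $\int_0^r\Tme(r-s)Bu(s)\ds=\tilde u(\p+r-1)$ by integration by parts, represent $\Phi$ by a measure $\mu$ via Riesz--Markov, and bound $\|\Ftn\|$ by $|\mu|\bigl([1-\tn,1]\bigr)<1$ using the little mass hypothesis. The only differences are cosmetic packaging (Minkowski's integral inequality and Young's convolution inequality where the paper computes directly with Fubini and H\"older), yielding the same constants.
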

\begin{proof}
By Corollary~\ref{cor:GG-ubdd} it suffices to show that the conditions (ii)--(v) of Theorem~\ref{thm:WS} are satisfied. To this end we first note that $0\in\rho(A)$ and the Dirichlet operator $D_0:\CC\to\Lpne$ is given by $D_0\alpha=\alpha\cdot\eins$ where $\eins(s)=1$ for all $s\in[0,1]$.


\smallbreak
(ii) By Remark~\ref{rem:Btn} it suffices to verify estimate \eqref{eq:add-B-M} where we may assume
 $u\in\Wepnntn$ for some $0<\tn\le1$. Using integration by parts and  \cite[Thm.~4.2]{Nei:81} we conclude%
\footnote{For a function $g$ defined on an interval we denote in the sequel by $\tilde g$ its extension to $\RR$ by the value $0$.}
\begin{align}\label{eq:Ft-expl}
\int_0^\tn\Tme(\tn-s)Bu(s)\ds
&=-\int_0^\tn\Tme(\tn-s)\Ame D_0u(s)\ds\notag\\
&=D_0u(\tn)-\int_0^\tn T(\tn-s)D_0u'(s)\ds\notag\\
&=u(\tn)\cdot\eins-\int_0^\tn\bigl(T(\tn-s)\eins\bigr)\cdot u'(s)\ds\notag\\
&=u(\tn)\cdot\eins-\int_{\max\{0,\ps+\tn-1\}}^\tn\kern-6ptu'(s)\ds\notag\\
&=u\bigl(\max\{0,\p+\tn-1\}\bigr)\notag\\
&=\tilde u(\p+\tn-1).
\end{align}
This implies $\|\Btn u\|_X=\|\Btn u\|_p\le\|u\|_p$  for all  $u\in\Wepnntn$ which shows (ii).
\smallbreak
(iii) By the Riesz--Markov representation theorem there exists a regular complex Borel measure $\mu$ on $[0,1]$ such that
\begin{equation}\label{rep-Phi}
\Phi f=\int_0^1 f(r)\dmr\quad\text{for all }f\in\Cne.
\end{equation}
Using Fubini's theorem and H\"{o}lder's inequality we obtain for $0<\tn\le1$ and $f\in D(A)$
\begin{align}\label{eq:est-C-admiss}
\int_0^\tn\bigl|CT(s)f\bigr|^p\ds
&=\int_0^\tn\bigl|\Phi\tilde f(\p+s)\bigr|^p\ds\notag\\
&\le\int_0^\tn\Bigl(\int_0^1 \bigl|\tilde f(r+s)\bigr|\dmar\Bigr)^p\ds\notag\\
&\le\int_0^\tn \bigl(|\mu|[0,1]\bigr)^{p-1}\cdot\int_0^1 \bigl|\tilde f(r+s)\bigr|^p\dmar\ds\notag\\
&=\|\mu\|^{p-1}\cdot\int_0^1\int_0^\tn \bigl|\tilde f(r+s)\bigr|^p\ds\dmar\notag\\
&\le\|\mu\|^{p}\cdot\|f\|_p^p,
\end{align}
where we put $\|\mu\|:=|\mu|[0,1]$ (which coincides with $\|\Phi\|_\infty$). This proves (iii).
\smallbreak
(iv) From \eqref{eq:Ft-expl} we obtain for $0<\tn\le1$ and  $u\in\Wepnntn$ by similar arguments as in (iii) 
\begin{align}\label{eq:est-Ft}
\int_0^\tn\Bigl|C\int_0^r\Tme(r-s)Bu(s)\ds\Bigr|^p\dr
&=\int_0^\tn\bigl|\Phi\,\tilde u(\p+r-1)\bigr|^p\dr\notag\\
&=\int_0^\tn\Bigl|\int_{1-r}^1u(s+r-1)\dms\Bigr|^p\dr\notag\\
&\le\int_0^\tn\bigl(|\mu|[1-r,1]\bigr)^{p-1}\cdot\int_{1-r}^1\bigl|u(s+r-1)\bigr|^p\dmas\dr\notag\\
&\le\bigl(|\mu|[1-\tn,1]\bigr)^{p-1}\cdot\int_{1-\tn}^1\int_{1-s}^1\bigl|u(s+r-1)\bigr|^p\dr\dmas\notag\\
&\le\bigl(|\mu|[1-\tn,1]\bigr)^{p}\cdot\|u\|_p^p.
\end{align}
This shows (iv).

\smallbreak
(v) Since by assumption $\Phi$ has little mass in $r=1$, it follows that $|\mu|[1-\tn,1]<1$ for sufficiently small $\tn>0$. Hence from estimate \eqref{eq:est-Ft} and the denseness of $\Wepnntn$ in $\Lpntn$ it follows that $\|\Ftn\|\le|\mu|[1-\tn,1]<1$ for $0<\tn\le 1$ sufficiently small. This implies $1\in\rho(\Ftn)$ as claimed.
\end{proof}

\begin{rems}
(i) Corollary~\ref{cor:gen-nmiz} could be easily generalized (with essentially the same proof) to the first derivative on $\rL^p\bigl([0,1],\CC^n\bigr)$. One could even go further and prove a similar result on $\rL^p\bigl([0,1],E\bigr)$ for a (possibly infinite dimensional) Banach space $E$ provided the boundary operator $\Phi$ has a representation as a Riemann-Stieltjes integral replacing \eqref{rep-Phi}.

\smallbreak
(ii) In most cases the admissibility of the identity as a feedback operator is verified by showing that $\|\Ftn\|<1$ for sufficiently small $\tn>0$. If we choose $\Phi=\alpha\delta_1$, by \eqref{eq:Ft-expl} we obtain $\Ftn=\alpha Id$ for all $\tn>0$, hence $1\in\rho(\Ftn)$ if and only if $\alpha\ne1$. This provides an example where our perturbation theorem is applicable even if $\|\Ftn\|>1$ for all $\tn>0$. Note that for $\alpha=1$ we obtain $A^\Phi=A_m$, hence in this case $A^\Phi$ cannot be a generator.
\end{rems}

\appendix

\section{Estimating the $p$-Norm of a Triangular Toeplitz Matrix}\label{sec:append}

For the proof of Lemma~\ref{lem:1-in-rho-Fi} we needed the following result.

\begin{lem}\label{lem:est-n-Toeplitz}
For a Banach space $X$ endow $\sX:=X^n$, $n\in\NN$, with the $p$-norm
\[\bigl\|(x_1,\ldots,x_n)^T\bigr\|_p:=\Bigl(\sum_{k=1}^n\|x_k\|^p\Bigr)^{\frac1p}\]
\end{lem}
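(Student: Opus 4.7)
The plan is to first make precise what the statement should be: given bounded operators (on $X$, or between suitable Banach spaces) $D$, $B$, $C$, $R$, one considers on $\sX = X^n$ the lower-triangular block Toeplitz operator $M$ with diagonal blocks $D$ and sub-diagonal blocks $M_{ij}= CR^{i-j-1}B$ for $i>j$, and the goal is the bound
\[
\|M\| \;\le\; \|D\| \;+\; \|C\|\cdot\|B\|\cdot\sum_{l=1}^{n-1} \|R\|^{l-1},
\]
which is exactly what \eqref{eq:est-(1-F)inv} needs. My strategy is to split $M = \Delta + L$, where $\Delta$ is block-diagonal with copies of $D$ (so trivially $\|\Delta\|\le\|D\|$), and $L$ is the strictly lower triangular Toeplitz part carrying the entries $CR^{i-j-1}B$.

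The core step is to control $\|L\|$. For $x=(x_1,\dots,x_n)^T\in\sX$, submultiplicativity and the triangle inequality give, componentwise,
\[
\bigl\|(Lx)_i\bigr\|_X \;\le\; \|C\|\cdot\|B\|\sum_{j=1}^{i-1}\|R\|^{i-j-1}\,\|x_j\|_X.
\]
I then read the right-hand side as a discrete convolution: set $b_j:=\|x_j\|_X$ (a sequence in $\ell^p$ with $\|b\|_p = \|x\|_p$) and $a_k:=\|R\|^{k-1}$ for $1\le k\le n-1$, $a_0:=0$ (a sequence in $\ell^1$). Then
\[
\bigl\|(Lx)_i\bigr\|_X \;\le\; \|C\|\cdot\|B\|\cdot (a\ast b)_i,
\]
and the discrete Young inequality $\|a\ast b\|_{\ell^p}\le\|a\|_{\ell^1}\cdot\|b\|_{\ell^p}$ yields
\[
\|Lx\|_p \;\le\; \|C\|\cdot\|B\|\cdot \Bigl(\sum_{l=1}^{n-1}\|R\|^{l-1}\Bigr)\cdot\|x\|_p.
\]
Adding the two pieces gives the asserted estimate for $\|M\|$.

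There is essentially no hard analytic content here: the only pitfall is to keep the index bookkeeping straight (in particular the exponent $i-j-1$ and the one-sided truncation of $a$), and to verify that Young's convolution inequality is really applicable in this one-sided, finitely-supported block setting, which it is because the relevant sequences can be viewed as elements of $\ell^1(\mathbb{N})$ and $\ell^p(\mathbb{N})$ extended by zero. The elegance of the approach is that the Toeplitz structure is precisely what turns the naive componentwise triangle inequality into a true convolution, so that a sharp $\ell^1$--$\ell^p$ estimate is available and loses nothing beyond the operator-norm bounds $\|D\|$, $\|C\|$, $\|B\|$, $\|R\|$.
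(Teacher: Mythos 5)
Your proof is correct and follows essentially the same route as the paper: the paper's Lemma~\ref{lem:est-n-Toeplitz} establishes the general bound $\|\sT\|\le\sum_{j=0}^{n-1}\|T_j\|$ for a lower-triangular block Toeplitz matrix precisely via the componentwise triangle inequality, recognition of the resulting sum as a discrete convolution, and Young's inequality $\ell^1\ast\ell^p\subset\ell^p$. The only cosmetic difference is that you split off the diagonal block and specialize the subdiagonal entries to the form $CR^{i-j-1}B$ needed in \eqref{eq:est-(1-F)inv}, whereas the paper proves the estimate for arbitrary blocks $T_0,\dots,T_{n-1}$ (diagonal included in the convolution) and substitutes afterwards.
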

for some $1\le p\le+\infty$. Moreover, let $T_0,\ldots,T_{n-1}\in\sL(X)$. Then the norm of the Toeplitz operator matrix
\[
\sT:=\bigl(T_{j-i}\bigr)_{i,j=1}^n=
\left(
  \begin{array}{cccccc}
    T_0   & 0  & 0  & \ldots & \ldots & 0 \\
    T_1 & T_0 & 0 & \ddots &  & \vdots \\
    T_2 & T_1 & \ddots & \ddots & \ddots & \vdots \\
    \vdots & \ddots & \ddots & \ddots & 0 & 0 \\
    \vdots &  & \ddots & T_1 & T_0 & 0 \\
    T_{n-1} & \ldots & \ldots & T_2 & T_1 & T_0 \\
  \end{array}
\right)_{n\times n}\kern-12pt\in\sL(\sX)
\]
can be estimated as
\[
\|\sT\|\le\sum_{j=0}^{n-1}\|T_j\|.
\]

\begin{proof}
Let $\sx=(x_1,\ldots,x_n)^T\in\sX$. 
Then we can estimate
\begin{align*}
\|\sT\sx\|_p
&=\Bigl(\sum_{j=1}^n\Bigl\|\sum_{i=1}^jT_{j-i}x_i\Bigr\|^p\Bigr)^{\frac1p}\\
&\le\Bigl(\sum_{j=1}^n\Bigl(\sum_{i=1}^j\|T_{j-i}\|\cdot\|x_i\|\Bigr)^p\Bigr)^{\frac1p}\\
&=\Bigl(\sum_{j=1}^n\Bigl(\bigl((\|T_0\|,\|T_1\|,\ldots,\|T_{n-1}\|)*(\|x_1\|,\|x_2\|,\ldots,\|x_n\|)\bigr)(j)\Bigr)^p\Bigr)^{\frac1p}\\
&=\Bigl\|\bigl(\|T_0\|,\|T_1\|,\ldots,\|T_{n-1}\|\bigl)*\bigr(\|x_1\|,\|x_2\|,\ldots,\|x_n\|\bigr)\Bigr\|_p\\
&\le\Bigl\|\bigl(\|T_0\|,\|T_1\|,\ldots,\|T_{n-1}\|\bigr)\Bigl\|_1\cdot\Bigr\|\bigl(\|x_1\|,\|x_2\|,\ldots,\|x_n\|\bigr)\Bigr\|_p\\
&=\sum_{j=0}^{n-1}\|T_j\|\cdot \|\sx\|_p
\end{align*}
where in the second last step we applied Young's inequality to the convolution of sequences.
\end{proof}

%
%

\end{document}